\newtheorem{assumption}{Assumption}
\newtheorem{thm}{Theorem}[section]
\newtheorem{cor}[thm]{Corollary}
\newtheorem{prop}[thm]{Proposition}
\newtheorem{Def}[thm]{Definition}
\newtheorem{lemma}[thm]{Lemma}
\newtheorem{remark}[thm]{Remark}
\numberwithin{equation}{section}
\newcommand{\n}{\Vert}
\newcommand{\h}{\mathcal{H}}
\newcommand{\e}{\mathcal{E}}
\begin{document}

\begin{frontmatter}
	\title{\textbf{Well-posedness and strong attractors for a beam model with degenerate nonlocal strong damping}}
	\author{ Senlin Yan, Chengkui Zhong }
	
	\address{Department of Mathematics, Nanjing University, Nanjing, 210093, PR China}
	
	\cortext[]{
		E-mails: dg20210019@smail.nju.edu.cn(S. Yan), ckzhong@nju.edu.cn(C. Zhong), }

	\begin{abstract}
		This paper is devoted to initial-boundary value problem of an extensible beam equation with degenerate nonlocal energy damping in $\Omega\subset\mathbb{R}^n$: $u_{tt}-\kappa\Delta u+\Delta^2u-\gamma(\n \Delta u\n^2+\n u_t\n^2)^q\Delta u_t+f(u)=0$. We prove the global existence and uniqueness of weak solutions, which gives a positive answer to an open question in \cite{Silva}. Moreover, we establish the existence of a strong attractor for the corresponding weak solution semigroup, where the ``strong" means that the compactness and attractiveness of the attractor are in the topology of a stronger space $\mathcal{H}_{\frac{1}{q}}$.
	\end{abstract}
	
	\begin{keyword}
		Extensible beams; nonlocal energy damping; degenerate damping; weak solutions; strong attractor;
	\end{keyword}
	
\end{frontmatter}

\section{Introduction}
    In this paper, we study the following extensible beam equation with degenerate nonlocal energy damping
    \begin{equation}\label{1-1}
    	u_{tt}-\kappa\Delta u+\Delta^2u-\gamma(\n \Delta u\n^2+\n u_t\n^2)^q\Delta u_t+f(u)=0\quad \mathrm{in}\quad \Omega\times\mathbb{R}^+,
    \end{equation}
    where $\Omega\subset\mathbb{R}^n$ is a bounded $C^\infty$-domain; $\kappa\geq0,\ \gamma>0$ and $q\geq1$ are constants; $\n\cdot\n$ denotes the $L^2(\Omega)-$norm; the assumptions on the nonlinearity $f(u)$ will be given later.
    
    The initial conditions associated with (\ref{1-1}) are given by 
    \begin{equation}\label{1-2}
    	u(x,0)=u_0(x),\quad u_t(x,0)=u_1(x),\quad x\in\Omega,
    \end{equation}
    and the following hinged boundary condition is considered:
    \begin{align}
    	\label{1-4}u|_{\partial\Omega}=\Delta u|_{\partial\Omega}=0,\quad t\in\mathbb{R}^+.
    \end{align}
    
    In 1989 Balakrishnan and Taylor \cite{Taylor} proposed the following one-dimensional beam model with nonlocal energy damping in flight structures
    \begin{equation}\label{1-5}
    	u_{tt}-2\zeta\sqrt{\lambda}u_{xx}+\lambda u_{xxxx}-\gamma\Big[\int_{-L}^{L}(\lambda|u_{xx}|^2+|u_t|^2)dx\Big]^qu_{xxt}=0,
    \end{equation}
    where $u=u(x,t)$ represents the transversal deflection of a beam with length $2L>0$ in the rest position, $\gamma>0$ is a damping coefficient, $\zeta$ is a constant appearing in the approximation of Krylov-Bogoliubov and $\lambda=\frac{2\zeta\omega}{\sigma^2}$ with $\omega$ being the model frequency and $\sigma^2$ the spectral density of a Gaussian external force (see Eq. (4.2) in \cite{Taylor}).  After that, Silva, Narciso and Vicente \cite{Silva} investigated $n$-dimensitional version of (\ref{1-5}) with nonlinear source term 
    \begin{equation}\label{1-6}
    	u_{tt}-\kappa\Delta u+\Delta^2u-\gamma(\n \Delta u\n^2+\n u_t\n^2)^q\Delta u_t+f(u)=0,
    \end{equation}
    subjected to clamped boundary condition. They proved existence and polynomical stability of regular solutions in the phase space $\h_2=(H^4\cap H^2_0)\times H_0^2$ when $q\geq1$ and $f(u)$ satisfies the growth condition
    \[|f'(u)|\leq C(1+|u|^\rho),\quad\mathrm{with}\quad 0\leq\rho\leq\frac{4}{n-4}\]
    and the dissipative condition 
    \begin{equation}\label{1-7}
    	-\frac{\theta}{2}|u|^2\leq F(u)\leq f(u)u+\frac{\theta}{2}|u|^2,
    \end{equation}
    where $\theta\in[0,\lambda_1)$ and $\lambda_1$ is the first engenvalue of the bi-harmonic operator $\Delta^2$. Due to the degeneracy of the damping coefficient, they only deal with regular solution and make the existence and uniqueness of weak solution an open question (see \cite{Silva}, Remark 2). Later, by neglecting the nonlocal term corresponding to the velocity, Cavalcanti et al. \cite{Cavalcanti} studied the model 
    \begin{equation}\label{1-9}
    	u_{tt}+\Delta^2u-M(\n\nabla u\n^2)\Delta u+\n \Delta u\n^2A u_t=0,
    \end{equation}
    with $A=-\Delta$ or $A=I$ (identity) and the clamped boundary condition. Their main result states that the energy $E_u(t)=\frac{1}{2}\n\Delta u(t)\n^2+\frac{1}{2}\n u_t(t)\n^2+\frac{1}{2}\int_0^{\n\nabla u(t)\n^2}M(s)ds$ decays to zero uniformly for every regular solution whose initial data is taken from a bounded subset of $\h_2$. In particular, when $A=-\Delta$, they only obtained the existence and uniqueness for regular solution. 
    We also refer to \cite{Cavalcanti1,Clark1,Clark2,Gomes1,Gomes2,Ma1,Ma2,Yang} for the pioneering studies where well-posedness, stability and asymptotic behavior of solutions are studied for beam/plate models with degenerate or non-degenerate nonlocal dampings.

    The global attractor theory of beam equations with nonlocal damping has received much more attention in recent years (cf. \cite{Chueshov2015,Chueshov2008,Li1,Silva2014,Silva2015,Silva2017,Sun1,Sun2,Zhao1,Zhao2} and reference therein). Motivated by model (\ref{1-6}), the authors in \cite{Sun2} studied a more general case
    \begin{equation}\label{1-8}
    	u_{tt}+\Delta^2u-\kappa\phi(\n\nabla u\n^2)\Delta u-M(\n\xi_u\n_\h^2)\Delta u_t+f(u)=h,
    \end{equation}
    where $\n\xi_u\n_\h^2=\n \Delta u\n^2+\n u_t\n^2$ and $M
    \in C^1(\mathbb{R}^+)$. Taking $M(s)=\gamma s^q$, the energy damping in (\ref{1-8}) reduces to the one in (\ref{1-6}). But they restrict themselves to the non-degenerate case: $M(s)>0$ for all $s\geq0$, in order to verify the existence and uniqueness of weak solutions. Finally, they obtained the strong global and exponential attractors and their robustness on the perturbed parameter $\kappa\in\Lambda$, where the ``strong'' means that the compactness, the attractiveness and the finiteness of the fractal dimension of the attractors are all in the topology of the regular space $\h_2$. Taking into account the rotational force in beam equations, the authors in \cite{Sun1} considered the following model
    \begin{equation}
    	(1-\alpha\Delta)u_{tt}+\Delta^2u-\phi(\n\nabla u\n^2)\Delta u-M(\n\xi_u\n_\h^2)\Delta u_t+f(u)=h,
    \end{equation}
    where  $\n\xi_u\n_\h^2=\n \Delta u\n^2+\n u_t\n^2$ and $M$ is still non-degenerate: $M(s)>0,\forall s\geq0$. They proved the well-posedness of weak solutions, the existence of global and exponential attractors and their continuity with respect to $\alpha\in[0,1]$.
    
     To our best knowledge, the existence and uniqueness of weak solution of problem (\ref{1-1}) is still open and the main difficulty of which comes from degenerate strong energy damping, as mentioned in \citep[Remark 2]{Silva}. Specifically, for the non-degenerate case, one can obtain the well-posedness of weak solution by approximating regular solutions in a standard way (cf. \cite{Sun1,Sun2}). When considering nonlocal weak damping such as $(\n \Delta u\n^2+\n u_t\n^2)^qu_t$, one can use the semigroup theory to obtain mild solutions (cf. \cite{Cavalcanti,Gomes1,Gomes2}), since $(\n \Delta u\n^2+\n u_t\n^2)^qu_t$ is locally Lipschitz continuous on $\h$. However, in the case of degenerate strong damping, both approximation method and semigroup theory are invalid.
    
    Our goal in this paper is to overcome this difficulty and analyze the well-poseness and long-time behavior of weak solution for problem (\ref{1-1}). Using the interpolation inequality and Galerkin approximation, we first prove that for any initial data $(u_0,u_1)\in \h$, the weak solution exists and it possesses an extra regularity when $t>0$, see Theorem \ref{well-posedness}. To prove the uniqueness, we establish a lemma which implies that any trajectory with non-zero initial data will not decay to zero in any finite time, although it will decay to zero as $t\rightarrow\infty$, see Lemma \ref{lemma1}. Based on this lemma, the damping coefficient is bounded from below in any finite time interval and the uniqueness is reduced to non-degenerated case, see Theorem \ref{uniqueness}. Theorem \ref{well-posedness} and Theorem \ref{uniqueness} can be seen as our first result and they have answered the open question proposed in \citep[Remark 2]{Silva}.
    
    On the other hand, in order to study the global attractor of weak solution semigroup, the nonlinearity $f(u)$ is taken more generally than (\ref{1-7}), see Remark \ref{remark3.2}. It is worth mentioning that, under the condition (\ref{1-7}), the results in \cite{Silva} implies that the solution semigroup has the global attractor as a set of a single point: $\mathcal{A}=\{(0,0)\in\h\}$. In our case, we prove the existence of the global attractor for the dynamical system $(\h,S(t))$ and it is a bounded set in a more regular space $\h_{\frac{1}{q}}$ due to the extra regularity of weak solution. Moreover, using the norm-to-weak continuous semigroup theory established in \cite{Zhong}, we show that $S(t)$ is actually a norm-to-weak continuous $(\h,\h_{\frac{1}{q}})$-semigroup and the attractor $\mathcal{A}$ is actually an $(\h,\h_{\frac{1}{q}})$-global attractor, see Theorem \ref{thm}. This is our second result.
    
    The rest of this paper is organized as follows. In Section 2, we introduce some functional spaces and give an abstract formulation of problem (\ref{1-1}). In Section 3, we discuss the existence and uniqueness of weak solutions. In Section 4, we prove the existence of the global attractor for the weak solution semigroup. Finally, the existence of an $(\h,\h_{\frac{1}{q}})$-global attractor is established in Section 5.

\section{Preliminaries}
    In this section, we recall the theory on functional spaces that will be used later and give an abstract formulation of problem (\ref{1-1}). For brevity, we use the following abbreviations:
    \[L^p=L^p(\Omega),\quad \n\cdot\n=\n\cdot\n_{L^2},\]
    with $p\geq1$ and $(\cdot,\cdot)$ stands for the $L^2$-inner product as well as the notation of duality pairing between dual spaces.
    
    Let $\Omega$ be a $C^\infty$-domain of $\mathbb{R}^n$. For $0\leq s<\infty,\ 1<p<\infty$, we denote by $W^{s,p}:=W^{s,p}(\Omega)$ the Sobolev-Slobodeckij spaces over $\Omega$, which are defined as restrictions of the corresponding spaces over $\mathbb{R}^n$ (see \cite{Triebel} for more details). Note that, in the sense of equivalent norm, the spaces $W^{s,p}$ with $s=0,1,2,\cdots$ coincide with the classical Sobolev spaces of distributions whose derivatives up to order $s$ belong to $L^p$. The closure of $C_c^\infty(\Omega)$ in the space $W^{s,p}$ is denoted by $W_0^{s,p}$. In the particular case $p=2$ we use the notations $H^s:=W^{s,2}$ and $H_0^s:=W_0^{s,2}$. We also recall the classical Sobolev embedding theorem
    \begin{equation}\label{2-1}
    	H^s\hookrightarrow L^q,\qquad \frac{1}{q}\geq \frac{1}{2}-\frac{s}{n},\quad s\geq0,\quad 2\leq q<\infty.
    \end{equation}
     
     Let $A=\Delta^2$. As we know, the operator $A$ associated with the boundary condition (\ref{1-4}) is a self-adjoint positive operator acting on $L^2$ and possesses discrete spectrum: 
     \[Ae_i=\lambda_i e_i,\quad 0<\lambda_1\leq\lambda_2\leq\cdots,\quad \lim_{i\rightarrow\infty}\lambda_i=\infty,\]
     where the eigenvectors $\{e_i\}_{i\geq1}$ are chosen to be an orthonormal basis in $L^2$. Then, we define the powers of $A$ as follows:
     \[A^su=\sum_{i=1}^{\infty}c_i\lambda_i^se_i,\quad s\in\mathbb{R},\quad u=\sum_{i=1}^{\infty}c_ie_i\in D(A^s),\]
     where $D(A^s)=\{u|u\in L^2,\ A^su\in L^2\}$ is a Hilbert space with the scalar product and the norm
     \[(u,v)_{D(A^s)}=(A^su,A^sv),\quad \n u\n_{D(A^s)}=\n A^su\n.\]
     In particular, $D(A)=\{u|u\in H^4,\ u|_{\partial\Omega}=\Delta u|_{\partial\Omega}=0\}$, $D(A^\frac{1}{2})=H^2\cap H^1_0$, $D(A^\frac{1}{4})=H^1_0$ and $D(A^s)\hookrightarrow H^{4s}$ for all $s\in[0,1]$.
     
     We have the compact embedding 
     \begin{equation}\label{2-2}
     	D(A^\alpha)\hookrightarrow\hookrightarrow D(A^\beta),\quad\forall \alpha>\beta.
     \end{equation}
     and the interpolation inequality:
     \begin{equation}\label{2-3}
     	\n A^su\n\leq\n A^ru\n^\theta\n A^tu\n^{1-\theta},\quad\forall u\in D(A^r)\cap D(A^t),
     \end{equation}
     where $-\infty<r\leq s\leq t<\infty,\ \theta\in[0,1]$ and $s=\theta r+(1-\theta)t$. Moreover, $D(A^{-s})$ can be viewed as the dual space of $D(A^s)$ for every $s\in\mathbb{R}$.

     For convenience, we also introduce the following notations that will be used throughout the paper. Let the phase spaces
     \[V_s:=D(A^\frac{s}{4}),\quad \mathcal{H}_s:=V_{2+s}\times V_s,\quad s\in\mathbb{R}\]
     with the norm
     \[\n u\n_{s}:=\n u\n_{V_s},\quad \n(u,v)\n^2_{\mathcal{H}_s}:=\n u\n^2_{2+s}+\n v\n^2_{s},\]
     and denote $\h=\h_0$ when $s=0$ for simplicity. From the above definition, we have $V_s\hookrightarrow H^{s}$ for any $s\in[0,4]$. We also have the inequalities
     \begin{equation}\label{2-4}
     	\lambda_1\n u\n^2\leq \n  u\n_2^2,\quad \lambda_1^\frac{1}{2}\n  u\n_1
     	^2\leq \n  u\n_2^2,\quad\forall u\in V_2.
     \end{equation}

     Using above notations, we can rewrite problem (\ref{1-1})-(\ref{1-4}) at an abstract level given by
     \begin{equation}\label{eq}
     	\begin{cases}
     		u_{tt}+\kappa A^\frac{1}{2}u+Au+\gamma\n(u,u_t)\n_{\h}^{2q}A^\frac{1}{2}u_t+f(u)=0,\\
     		(u(0),u_t(0))=(u_0,u_1),\\
     		u|_{\partial\Omega}=\Delta u|_{\partial\Omega}=0.
     	\end{cases}
     \end{equation}

\section{Well-posedness of weak solutions}
     In this section, we study the existence and uniqueness of weak solutions of problem (\ref{eq}).
     \begin{assumption}\label{assumption}
     	(i) the constants $\kappa\geq0$, $\gamma >0$, $q\geq1$;
     	
     	(ii) $f\in C^1(\mathbb{R})$, $f(0)=0$,
     	\begin{align}\label{3-1}
     		|f'(u)|\leq C(1+|u|^p),\ \forall u\in\mathbb{R},
     	\end{align}
        \begin{align}\label{3-2}
        	\liminf_{|u|\rightarrow\infty} f'(u)>-\lambda_1,\ \forall u\in\mathbb{R},
        \end{align}
        where $C>0$ and the growth exponent $p$ satisfies
        \[1\leq p<\infty\ \ \mathrm{if}\ \ 1\leq n\leq4\quad and\quad 1\leq p\leq\frac{4}{n-4}\ \ \mathrm{if}\ \ n\geq5.\]
     \end{assumption}
  
     \begin{remark}
     	Assumption (\ref{3-1}) and the Mean Value Theorem imply that there exists some constant $C>0$ such that
     	\begin{equation}\label{3-3}
     		|f(u)|=|f(u)-f(0)|\leq C(1+|u|^p)|u|,\ \forall u\in\mathbb{R}.
     	\end{equation}
     	
     	Set $F(u)=\int_{0}^{u}f(\tau)d\tau$. Assumption (\ref{3-2}) implies that
     	\begin{equation}\label{3-4}
     		\int_\Omega F(u)dx\geq -\frac{\mu}{2}\n u\n^2-C,
     	\end{equation}
        \begin{equation}\label{3-5}
     		(f(u),u)\geq \int_{\Omega}F(u)dx-\frac{\mu}{2}\n u\n^2-C,
     	\end{equation} 
        for some $C>0$ and $\mu\in[0,\lambda_1)$ (See \cite{Pata1}).
     \end{remark}
 
     \begin{remark}\label{remark3.2}
     	The assumptions (\ref{3-1})-(\ref{3-2}) of $f$ are more general than assumptions (14)-(16) in \cite{Silva}. That is to say, if a function $f$ satisfies (14)-(16) in \cite{Silva} then $f$ also satisfies (\ref{3-1})-(\ref{3-2}). While there exists some $f$ which satisfies (\ref{3-1})-(\ref{3-2}) and do not satisfy (14)-(16) in \cite{Silva}, e.g. $f(u)=u-(3\lambda_1+9)u^2+u^3$ when $n=5$.
     \end{remark}
 
     \begin{Def}
     	For any $T>0$, a function $u(t),\ t\in[0,T]$ is said to be a weak solution of problem (\ref{eq}) if $(u,u_t)\in L^\infty(0,T;\h)$ and Eq. (\ref{eq}) is satisfied in the sense of distribution, i.e.
     	\begin{align*}
     		-\int_{0}^{T}(u_t,\phi_t)dt+&\kappa\int_{0}^{T}(A^\frac{1}{4} u,A^\frac{1}{4}\phi)dt+\int_{0}^{T}(A^\frac{1}{2} u,A^\frac{1}{2}\phi)dt\\
     		&+\gamma\int_{0}^{T}\n(u,u_t)\n_\h^{2q}(A^\frac{1}{4} u_t,A^\frac{1}{4}\phi)dt+\int_{0}^{T}(f(u),\phi)dt=0,
     	\end{align*}
     	for any $\phi\in C_c^\infty((0,T)\times\Omega)$.
     \end{Def}
     
     We restrict ourselves to the case $n\geq 5$ in this section, but all the conclusions in this section hold for $1\leq n\leq 4$.
     
     \begin{thm}\label{well-posedness}
     	Let $T>0$ be arbitrary and Assumption \ref{assumption} be valid. Then, we have:
     	
        (i) For any initial data $(u_0,u_1)\in\h$, problem (\ref{eq}) admits a weak solution $u$ and the solution possesses the following regularity
        \begin{equation}
        	(u,u_t)\in L^\infty(a,T;\h_s),\quad \forall 0<a<T,
        \end{equation}
        where $s=\frac{1}{q}$.
        
        (ii) The following estimates hold for the solution u:
        \begin{equation}\label{3-7}
        	\n(u(t),u_t(t))\n_{\h}^2+\int_{0}^{t}\n(u(\tau),u_t(\tau))\n_{\h}^{2q}\n u_t(\tau)\n_1^2d\tau\leq Q_1\big(\n(u_0,u_1)\n_\h\big),\ \forall t\in[0,T],
        \end{equation}
        \begin{equation}\label{3-8}
        	\begin{split}
        		\n(u(t),u_t(t))\n_{\h_s}^2+\int_{t}^{t'}\n(u(\tau),&u_t(\tau))\n_{\h}^{2q}\n u_t(\tau)\n_{1+s}^2d\tau\\
        		&\leq t^{-(1+s)}Q_2\big(\n(u_0,u_1)\n_\h,T\big),\ \forall 0<t\leq t'\leq T,
        	\end{split}
        \end{equation}
        where $s=\frac{1}{q}$ and $Q_1,Q_2$ are monotone increasing functions independent of $u$ and $t$.
        
        (iii) The solution $u$ satisfies the energy equality
        \begin{equation}\label{energy}
        	E((u(t)))+\gamma\int_{\tau}^{t}\n(u(s),u_t(s))\n_{\h}^{2q}\n u_t(s)\n_1^2ds=E(u(\tau)),\quad \forall0\leq \tau\leq t\leq T,
        \end{equation}
        where 
        \begin{equation}\label{energyfunction}
        	E(u)=\frac{1}{2}\n u_t\n^2+\frac{\kappa}{2}\n  u\n_1^2+\frac{1}{2}\n  u\n_2^2+\int_{\Omega}F(u)dx.
        \end{equation}
        In particilar, $(u,u_t)\in C([0,T];\h)$.
     \end{thm}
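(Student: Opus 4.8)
The plan is to construct the solution by Galerkin's method, to establish the two a priori bounds (\ref{3-7})--(\ref{3-8}) uniformly at the approximate level, to pass to the limit, and finally to upgrade the resulting energy inequality to the equality (\ref{energy}). For the first step, let $\{e_i\}_{i\ge1}$ be the eigenbasis of $A$, $V_m=\mathrm{span}\{e_1,\dots,e_m\}$, and $P_m$ the orthogonal projection onto $V_m$; one seeks $u^m\in V_m$ solving the projection of (\ref{eq}) onto $V_m$ with $(u^m(0),u^m_t(0))=(P_mu_0,P_mu_1)$. Local solvability is immediate since $(v,w)\mapsto\n(v,w)\n_\h^{2q}$ and $v\mapsto P_mf(v)$ are locally Lipschitz on $V_m$. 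Testing with $u^m_t$ gives
\[
\frac{d}{dt}E(u^m)+\gamma\n(u^m,u^m_t)\n_\h^{2q}\n u^m_t\n_1^2=0 ,
\]
with $E$ as in (\ref{energyfunction}). Since (\ref{3-4}) and (\ref{2-4}) yield $E(u^m)\ge c_0\n(u^m,u^m_t)\n_\h^2-C$ for some $c_0>0$, while (\ref{3-3}) bounds $E(u^m(0))$ in terms of $\n(u_0,u_1)\n_\h$, the approximation $u^m$ is global in time and satisfies (\ref{3-7}) with constants independent of $m$.

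The second and decisive step is the smoothing estimate for $t>0$, and this is where the degeneracy of the damping has to be defeated. Put $s=\tfrac1q\in(0,1]$ and test the projected equation with $t^{1+s}A^{s/2}u^m_t$. Using that $A$ is self-adjoint, one has $(Au^m,A^{s/2}u^m_t)=\tfrac12\tfrac{d}{dt}\n u^m\n_{2+s}^2$, $(A^{1/2}u^m,A^{s/2}u^m_t)=\tfrac12\tfrac{d}{dt}\n u^m\n_{1+s}^2$ and $(A^{1/2}u^m_t,A^{s/2}u^m_t)=\n u^m_t\n_{1+s}^2$; integrating over $(0,t)$ and integrating by parts the factor $\tau^{1+s}$ — whose boundary term at $\tau=0$ vanishes, which is precisely why no regularity of $(u_0,u_1)$ beyond $\h$ is needed — one obtains
\begin{align*}
\frac{t^{1+s}}2\n(u^m,u^m_t)\n_{\h_s}^2 & +\gamma\int_0^t\tau^{1+s}\n(u^m,u^m_\tau)\n_\h^{2q}\n u^m_\tau\n_{1+s}^2\,d\tau\\
& \le C\int_0^t\tau^{s}\n(u^m,u^m_\tau)\n_{\h_s}^2\,d\tau+Ct^{1+s}+\Big|\int_0^t\tau^{1+s}(f(u^m),A^{s/2}u^m_\tau)\,d\tau\Big| .
\end{align*}
The right-hand side is handled through the interpolation inequality (\ref{2-3}) and Young's inequality: $\n u^m_\tau\n_s$ is interpolated between $\n u^m_\tau\n$ (uniformly bounded by (\ref{3-7})) and $\n u^m_\tau\n_{1+s}$, the $V_{1+s}$-norm of $u^m$ by the bounded $V_2$-norm, the contribution of $\n u^m\n_{2+s}^2$ is absorbed — through the singular weight $\tau^{-1}$ implicit in the first integral — by the left-hand side, and the nonlocal source is controlled via the growth restriction (\ref{3-1}) and the Sobolev embeddings. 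The choice $s=\tfrac1q$ is not accidental: it aligns the power of $\n u^m_\tau\n_{1+s}$ produced by interpolation with the $q$-th power of the nonlocal coefficient, so that the (possibly vanishing) dissipation on the left, together with the dissipation integral $\int_0^t\n(u^m,u^m_\tau)\n_\h^{2q}\n u^m_\tau\n_1^2\,d\tau\le Q_1$ already furnished by (\ref{3-7}), compensate the degeneracy; a Gronwall-type argument then gives (\ref{3-8}) for $u^m$ with $Q_2$ independent of $m$. Carrying this out rigorously — extracting a genuine gain of regularity from a coefficient that may degenerate, which is the open point raised in \citep[Remark 2]{Silva} — is the main obstacle; morally it is the quantitative, approximate-level counterpart of the non-extinction phenomenon of Lemma \ref{lemma1}.

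For the passage to the limit, Steps 1--2 bound $u^m$ in $L^\infty(0,T;V_2)$, $u^m_t$ in $L^\infty(0,T;V_0)$, and, for each $a\in(0,T)$, $u^m$ in $L^\infty(a,T;V_{2+s})$ and $u^m_t$ in $L^\infty(a,T;V_s)$. Along a subsequence $u^m$ converges weakly-$*$ accordingly, and the Aubin--Lions lemma on $(a,T)$ — where the extra regularity makes the embedding compact — gives $u^m_t\to u_t$ strongly in $L^2(a,T;V_0)$, hence a.e., so $\n(u^m,u^m_t)\n_\h^{2q}\to\n(u,u_t)\n_\h^{2q}$ a.e. and, being bounded, strongly in $L^2(a,T)$; this lets one pass to the limit in the damping term, and in $f(u^m)$ using the a.e. convergence of $u^m$ and the growth bound (\ref{3-1}). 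Since every $\phi\in C_c^\infty((0,T)\times\Omega)$ is supported in some $[a,T]\times\Omega$, the limit $u$ is a weak solution; it inherits the regularity (i) and, by weak lower semicontinuity, the bounds (\ref{3-7})--(\ref{3-8}), which is (ii).

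Finally, for the energy equality and continuity, passing to the limit in the Galerkin energy law together with weak lower semicontinuity of the dissipation gives the energy inequality $E(u(t))+\gamma\int_0^t\n(u,u_\tau)\n_\h^{2q}\n u_\tau\n_1^2\,d\tau\le E(u_0)$. Conversely, the regularity $(u,u_t)\in L^\infty(a,T;\h_s)$ from (i) makes $u_t$ an admissible multiplier on $(a,T]$ (after a time mollification, or using the non-degeneracy of the coefficient provided by Lemma \ref{lemma1}), which yields the reverse inequality on $(0,T]$; hence (\ref{energy}) holds for $0<\tau\le t\le T$ and $t\mapsto E(u(t))$ is continuous on $(0,T]$. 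Then $\limsup_{t\to0^+}E(u(t))\le E(u_0)$ from the inequality, while the weak continuity $(u(t),u_t(t))\rightharpoonup(u_0,u_1)$ in $\h$ together with the compactness of $v\mapsto\int_\Omega F(v)$ gives $\liminf_{t\to0^+}E(u(t))\ge E(u_0)$; thus $E(u(t))\to E(u_0)$, which promotes weak to strong convergence in $\h$ and extends (\ref{energy}) to $\tau=0$. Since $E(u(\cdot))$ is then continuous on $[0,T]$ and $\int_\Omega F(u(\cdot))$ is continuous by compactness, $t\mapsto\tfrac12\n u_t\n^2+\tfrac\kappa2\n u\n_1^2+\tfrac12\n u\n_2^2$ is continuous, and combined with weak continuity in $\h$ this gives $(u,u_t)\in C([0,T];\h)$.
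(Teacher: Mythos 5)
Your overall architecture (Galerkin, energy estimate, weighted multiplier $t^{1+s}A^{s/2}u^m_t$ with $s=1/q$, interpolation of $\n u^m_t\n_s$ absorbed into the degenerate dissipation, Aubin--Lions on $(a,T)$) matches the paper's, and you correctly identify why $s=1/q$ is the right exponent. But the decisive step contains a genuine gap. When you differentiate the weight $t^{1+s}$ you produce the term $(1+s)\int_0^t\tau^{s}\Phi(u^m)\,d\tau$ with $\Phi(u^m)\geq\frac12\n u^m\n_{2+s}^2$, and you claim the $\n u^m\n_{2+s}^2$ contribution is ``absorbed by the left-hand side through the singular weight $\tau^{-1}$.'' It cannot be: the left-hand side controls $t^{1+s}\Phi(u^m(t))$ only at the endpoint and the dissipation integral only sees $\n u^m_\tau\n_{1+s}^2$ with a possibly vanishing coefficient, while a Gronwall inequality of the form $y(t)\leq\int_0^t\tau^{-1}y(\tau)\,d\tau+F(t)$ with the non-integrable kernel $\tau^{-1}$ does not yield a bound (it is consistent with $y$ arbitrarily large). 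The paper's way out is an additional multiplier: testing with $A^{s/2}u^m$ (not $A^{s/2}u^m_t$) gives the identity $\n u^m\n_{2+s}^2+\kappa\n u^m\n_{1+s}^2=\n u^m_t\n_s^2-\frac{d}{dt}(u^m_t,A^{s/2}u^m)-\frac{\gamma}{2}\frac{d}{dt}\big[\n(u^m,u^m_t)\n_\h^{2q}\n u^m\n_{1+s}^2\big]+\varphi(t)$ with $\varphi\in L^1(0,T)$ uniformly in $m$. Substituting this into the $\tau^s\Phi$ term converts the uncontrollable $\tau^s\n u^m\n_{2+s}^2$ into $\tau^s\n u^m_t\n_s^2$ (which your interpolation/Young step then absorbs into the degenerate damping, using $\n u^m_t\n^{2/s}\leq\n(u^m,u^m_t)\n_\h^{2q}$) plus exact time derivatives with weights $\tau^s,\tau^{1+s}$, which are handled by integration by parts. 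Without this elliptic identity the smoothing estimate does not close, and this is precisely the content of the ``main obstacle'' you defer.

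A secondary issue is the energy equality. Invoking Lemma \ref{lemma1} to justify $u_t$ as a multiplier is circular, since that lemma's proof already uses the continuity of $t\mapsto\n(u(t),u_t(t))\n_\h$ coming from (\ref{energy}). Moreover, the extra regularity $u_t\in V_s$ does not by itself make the pairing $(u_{tt},u_t)$ meaningful for $q>1$ (one would need $u_t\in V_{2-s}$, i.e.\ $s\geq1$). The paper instead proves (\ref{energy}) on all of $[0,T]$, including $\tau=0$, by applying the spectral projection $P_m$ to the \emph{solution} $u$ itself, testing with $P_mu_t$, and passing to the limit using $\n(u,u_t)\n_\h^{q}A^{1/4}u_t\in L^2(0,T;L^2)$ from (\ref{3-7}); this bypasses both your endpoint limiting argument at $t\to0^+$ and the admissibility question. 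Your Step 1 and the passage to the limit are essentially correct and agree with the paper.
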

     \begin{proof}
     	\textbf{(i)} We use Faedo-Galerkin method to prove the existence of weak solutions. Let $0<\lambda_1\leq\lambda_2\leq\cdots$ be the eigenvalues of the operator $A=\Delta^2$ with boundary condition (\ref{1-3}) or (\ref{1-4}) and $e_1,e_2,\cdots$ be the corresponding eigenfunctions such that they form an orthonormal basis in $L^2$. Then, $e_j\in C^\infty(\Omega)$ for all $j\geq1$ since $\Omega$ is smooth. Let $P_n:L^2\rightarrow L^2$ be the orthoprojector to the subspace $\mathrm{span}\{e_1,\cdots,e_n\}$. Consider the following approximate problem
     	\begin{equation}\label{ode}
     		\begin{cases}
     			(u^n_{tt},e_j)+\kappa(A^\frac{1}{2}u^n,e_j)+(Au^n,e_j)
     			+\gamma\n(u^n,u_t^n)\n_{\h}^{2q}(A^\frac{1}{2}u_t,e_j)+(f(u^n),e_j)=0,\\
     			(u^n(0),u^n_t(0))=(u_0^n,u_1^n):=(P_nu_0,P_nu_1)\rightarrow(u_0,u_1)\ \mathrm{in}\ \h,\quad j=1,2,\cdots,n,
     		\end{cases}
     	\end{equation}
     	which has a local solution 
     	\[u^n(t)=\sum_{j=1}^{n}T^{jn}(t)e_j\in\mathrm{Span}\{e_1,\cdots,e_n\},\ t\in[0,T_n)\]
     	by ODE theory. We need to give some a priori estimates in order to extend the local solution to the whole interval $[0,T]$ and to conclude the existence of weak solutions of problem (\ref{eq}).
     	
     	\textbf{A priori estimate.} Suppose that $\n(u_0,u_1)\n_{\h}\leq R.$ Then $\n(u^n_0,u^n_1)\n_{\h}\leq R,\ \forall n\in\mathbb{N}$. Multiplying (\ref{ode}) by $T_t^{jn}$ for every $j=1,2,\cdots,n$ and sum up all the equations, we get
     	\begin{equation}\label{3-2-2}
     		\frac{d}{dt}E(u^n)+\gamma\n(u^n,u^n_t)\n_{\h}^{2q}\n u^n_t\n_1^2=0,
     	\end{equation}
     	where $E(u^n)$ is the energy functional (\ref{energyfunction}) for $u^n$. From (\ref{2-4}) and (\ref{3-4}), we have
     	\begin{equation}\label{3-2-1}
     		E(u^n)\geq\frac{1}{2}\n u^n_t\n^2+\frac{1}{2}\n u^n\n_2^2-\frac{\mu}{2\lambda_1}\n u^n\n_2^2-C\geq\frac{\lambda_1-\mu}{2\lambda_1}\n(u^n,u^n_t)\n_{\h}^2-C.
     	\end{equation}
     	Integrating (\ref{3-2-2}) over $[0,t]$ yields
     	\begin{equation}\label{3-2-3}
     		E(u^n(t))+\gamma\int_{0}^{t}\n(u^n(\tau),u^n_t\tau)\n_{\h}^{2q}\n u^n_t(\tau)\n_1^2d\tau= E(u^n(0)).
     	\end{equation}
     	Combining (\ref{3-2-1}) and (\ref{3-2-3}), using (\ref{3-3}), we obtain that
     	\begin{equation}\label{3-2-4}
     		\n(u^n(t),u^n_t(t))\n_{\h}^2+\int_{0}^{t}\n(u^n(\tau),u^n_t(\tau))\n_{\h}^{2q}\n u^n_t(\tau)\n_1^2d\tau\leq C_R
     	\end{equation}
     	holds for all $t\in[0,T_n)$. This estimate allows us to extend all local solutions $u^n(t)$ to the whole interval $[0,T]$ and the estimate (\ref{3-2-4}) holds true for all $t\in[0,T]$ and $n\in\mathbb{N}$. Then, by (\ref{3-3}) and the Sobolev embedding $V_2\hookrightarrow L^{2p+2}$, we can deduce from (\ref{3-2-4}) that
     	\begin{equation}\label{3-2-5}
     		\begin{split}
     			\n f(u^n)\n^2&\leq\int_{\Omega}C(1+|u|^p)^2|u|^2dx\\
     			&\leq C\n u\n^2+C\n u\n_{L^{2p+2}}^{2p+2}\\
     			&\leq \frac{C}{\lambda_1}\n  u\n_2^2+C\n  u\n_2^{2p+2}\\
     			&\leq C_R,\qquad \forall t\in[0,T].
     		\end{split}
     	\end{equation}
     	Moreover, from (\ref{3-2-4}), (\ref{3-2-5}) and equations in (\ref{ode}), we also have
     	\begin{equation}\label{3-2-6}
     		\begin{split}
     			\n u^n_{tt}\n_{-2}&\leq \kappa\n u^n\n+\n u^n\n_2+\gamma\n(u^n,u^n_t)\n_{\h}^{2q}\n u^n_t\n+\n f(u^n)\n_{-2}\\
     			&\leq \frac{\kappa}{\sqrt{\lambda_1}}\n  u^n\n_2+\n u^n\n_2+\gamma\n(u^n,u^n_t)\n_{\h}^{2q}\n u^n_t\n+C\n f(u^n)\n\\
     			&\leq C_R,\qquad \forall t\in[0,T],
     		\end{split}     		
     	\end{equation}
     	and
     	\begin{equation}\label{3-2-7}
     		\begin{split}
     			&\qquad\int_{0}^t\left|\frac{d}{dt}\n(u^n,u^n_t)\n_{\h}^{2q}\right|d\tau\\
     			&=\int_0^tq\n(u^n,u^n_t)\n_{\h}^{2(q-1)}\left|\frac{d}{dt}(\n u^n\n_2^2+\n u_t^n\n^2)\right|d\tau\\
     			&\leq C_R\int_0^t\Big|(\kappa A^\frac{1}{2}u^n+\gamma\n(u^n,u^n_t)\n_{\h}^{2q}A^\frac{1}{2}u^n_t+f(u^n),u^n_t)\Big|d\tau\\
     			&\leq C_R\int_0^t\Big(\kappa\n u^n\n_2\n u^n_t\n+\gamma\n(u^n,u^n_t)\n_{\h}^{2q}\n u^n_t\n_1+\n f(u^n)\n\n u_t^n\n\Big)d\tau\\
     			&\leq tC_R+\gamma C_R\int_0^T\n(u^n,u^n_t)\n_{\h}^{2q}\n u^n_t\n_1d\tau\\
     			&\leq (1+t)C_R,\qquad \forall t\in[0,T].
     		\end{split}
     	\end{equation}
     	Estimate (\ref{3-2-7}) will be used later.
     	
     	Set $s=\frac{1}{q}$. By interpolation inequality (\ref{2-3}), we have
     	\begin{equation}\label{3-2-8}
     		\n u^n_t\n_s\leq \n u^n_t\n^\frac{1}{1+s}\n u^n_t\n_{1+s}^\frac{s}{1+s}.
     	\end{equation}
        Multiplying (\ref{ode}) by $\lambda_j^\frac{s}{2}T^{jn}$ for every $j=1,2,\cdots,n$ and sum up all the equations, we get
        \begin{align*}
        	&\frac{d}{dt}(u^n_t,A^\frac{s}{2}u^n)-\n u^n_t\n_s^2+\kappa\n u^n\n_{1+s}^2+\n u^n\n_{2+s}^2\\
        	&\qquad\qquad\qquad+\gamma\n(u^n,u^n_t)\n_\h^{2q}(A^\frac{1}{2}u^n_t,A^\frac{s}{2}u^n)+(f(u^n),A^\frac{s}{2}u^n)=0,
        \end{align*}
        Then, 
        \begin{equation}\label{3-2-9}
        	\begin{split}
        		&\quad\n u^n\n_{2+s}^2+\kappa\n u^n\n_{1+s}^2\\
        		&=\n u^n_t\n_s^2-\frac{d}{dt}(u^n_t,A^\frac{s}{2}u^n)-(f(u^n),A^\frac{s}{2}u^n)
        		-\gamma\n(u^n,u^n_t)\n_\h^{2q}(A^\frac{1}{2}u^n_t,A^\frac{s}{2}u^n)\\
        		&=\n u^n_t\n_s^2-\frac{d}{dt}(u^n_t,A^\frac{s}{2}u^n)-(f(u^n),A^\frac{s}{2}u^n)\\
        		&\quad-\frac{\gamma}{2}\frac{d}{dt}\left[\n(u^n,u^n_t)\n_\h^{2q}\n u^n\n_{1+s}^2\right]+\frac{\gamma}{2}\n u^n\n_{1+s}^2\frac{d}{dt}\n(u^n,u^n_t)\n_\h^{2q}\\
        		&=\n u^n_t\n_s^2-\frac{d}{dt}(u^n_t,A^\frac{s}{2}u^n)-\frac{\gamma}{2}\frac{d}{dt}\left[\n(u^n,u^n_t)\n_\h^{2q}\n u^n\n_{1+s}^2\right]+\varphi(t),
        	\end{split}
        \end{equation}
        where 
        \begin{align*}
        	\varphi(t):=-(f(u^n),A^\frac{s}{2}u^n)
        	+\frac{\gamma}{2}\n u^n\n_{1+s}^2\frac{d}{dt}\n(u^n,u^n_t)\n_\h^{2q}.
        \end{align*}
        Since $q\geq 1$, $0<s=\frac{1}{q}\leq1$, we have $V_2\hookrightarrow V_{1+s}\hookrightarrow V_{2s}$ and then $\varphi\in L^1(0,T)$ satisfying
        \begin{equation}\label{3-2-10}
        	\begin{split}
        		\int_{0}^{t}|\varphi(\tau)|d\tau&\leq\int_{0}^{t}|(f(u^n),A^\frac{s}{2}u^n)|d\tau+\frac{\gamma}{2}\int_{0}^{t}\n u^n\n_{1+s}^2\left|\frac{d}{dt}\n(u^n,u^n_t)\n_\h^{2q}\right|d\tau\\
        		&\leq \int_{0}^{t}\n f(u^n)\n\n u^n\n_{2}d\tau+\frac{\gamma C_R}{2}\int_{0}^{t}\left|\frac{d}{dt}\n(u^n,u^n_t)\n_\h^{2q}\right|d\tau\\
        		&\leq (1+t)C_R,\quad\forall t\in[0,T],
        	\end{split}
        \end{equation}
        where we have used estimates (\ref{3-2-4}), (\ref{3-2-5}) and (\ref{3-2-7}). Multiplying (\ref{ode}) by $\lambda_j^\frac{s}{2}T_t^{jn}$ for every $j=1,2,\cdots,n$ and sum up all the equations, we get 
     	\begin{equation}\label{3-2-11}
     		\frac{d}{dt}\Phi(u^n)+\gamma\n(u^n,u_t^n)\n_{\h}^{2q}\n u_t^n\n_{1+s}^2=-\frac{d}{dt}(f(u^n),A^\frac{s}{2}u^n)+(f'(u^n)u^n_t,A^\frac{s}{2}u^n),
     	\end{equation}
     	where
     	\[\Phi(u^n)=\frac{1}{2}\n u^n_t\n_s^2+\frac{\kappa}{2}\n u^n\n_{1+s}^2+\frac{1}{2}\n u^n\n_{2+s}^2\geq \frac{1}{2}\n(u^n,u^n_t)\n_{\h_s}^2.\]
     	By (\ref{3-1}), H\"{o}lder's inequality with $\frac{2}{n}+\frac{n-2s}{2n}+\frac{n-4+2s}{2n}=1$ and the Sobolev embeddings $V_s\hookrightarrow H^s\hookrightarrow L^\frac{2n}{n-2s}$, $V_{2-s}\hookrightarrow H^{2-s}\hookrightarrow L^\frac{2n}{n-4+2s}$, we have
     	\begin{equation}\label{3-2-12}
     		\begin{split}
     			|(f'(u^n)u^n_t,A^\frac{s}{2}u^n)|&\leq C|(u^n_t,A^\frac{s}{2}u^n)|+C|(|u^n|^pu^n_t,A^\frac{s}{2}u^n)|\\
     			&\leq C\n u^n_t\n\n u^n\n_{2s}+C\n |u^n|^p\n_{L^\frac{n}{2}}\n u^n_t\n_{L^\frac{2n}{n-2s}}\n A^\frac{s}{2}u^n\n_{L^\frac{2n}{n-4+2s}}\\
     			&\leq C\n u^n_t\n\n  u^n\n_2+C\n  u^n\n_2^p\n u^n_t\n_{s}\n u^n\n_{2+s}\\
     			&\leq C\n(u^n,u^n_t)\n_\h^2+C_R\n(u^n,u^n_t)\n_{\h_s}^2\\
     			&\leq C_R\Phi(u^n).
     		\end{split}
     	\end{equation}
     	Inserting (\ref{3-2-12}) into (\ref{3-2-11}) turns out
     	\begin{equation}\label{3-2-13}
     		\frac{d}{dt}\Phi(u^n)+\gamma\n(u^n,u_t^n)\n_{\h}^{2q}\n u_t^n\n_{1+s}^2\leq C_R\Phi(u^n)-\frac{d}{dt}(f(u^n),A^\frac{s}{2}u^n).
     	\end{equation}
     	When $0<t\leq T$, multiplying (\ref{3-2-13}) by $t^{1+s}$ and using (\ref{3-2-4}), (\ref{3-2-8}), (\ref{3-2-9}) as well as Young's inequality, we end up with
     	\begin{equation}\label{3-2-14}
     		\begin{split}
     			&\frac{d}{dt}\big(t^{1+s}\Phi(u^n)\big)+t^{1+s}\gamma\n(u^n,u_t^n)\n_{\h}^{2q}\n u_t^n\n_{1+s}^2-C_R\big(t^{1+s}\Phi(u^n)\big)\\
     			&\leq \frac{1+s}{2}\big(t^s\n u^n_t\n_s^2+t^s\kappa\n u^n\n_{1+s}^2+t^s\n u^n\n_{2+s}^2\big) -t^{1+s}\frac{d}{dt}(f(u^n),A^\frac{s}{2}u^n)\\
     			&\leq (1+s)t^s\n u^n_t\n_s^2+\frac{(1+s)t^s}{2}\varphi(t)-t^{1+s}\frac{d}{dt}(f(u^n),A^\frac{s}{2}u^n)\\
     			&\quad-\frac{(1+s)t^s}{2}\frac{d}{dt}(u^n_t,A^\frac{s}{2}u^n)-\frac{\gamma(1+s)t^s}{4}\frac{d}{dt}\left[\n(u^n,u^n_t)\n_\h^{2q}\n u^n\n_{1+s}^2\right]\\
     			&\leq (1+s)t^s\n u^n_t\n^\frac{2}{1+s}\n u^n_t\n_{1+s}^\frac{2s}{1+s}+t^s|\varphi(t)|+\psi(t)\\
     			&\leq \frac{st^{1+s}\gamma}{2}\n u^n_t\n^\frac{2}{s}\n u^n_t\n_{1+s}^2+\Big(\frac{\gamma}{2}\Big)^{-s}+t^s|\varphi(t)|+\psi(t)\\
     			&\leq \frac{t^{1+s}\gamma}{2}\n (u^n,u^n_t)\n_\h^{2q}\n u^n_t\n_{1+s}^2+t^s|\varphi(t)|+\psi(t)+C
     		\end{split}
     	\end{equation}
     	where
     	\begin{align*}
     		\psi(t):=&-t^{1+s}\frac{d}{dt}(f(u^n),A^\frac{s}{2}u^n)-\frac{(1+s)t^s}{2}\frac{d}{dt}(u^n_t,A^\frac{s}{2}u^n)\\
     		&-\frac{\gamma(1+s)t^s}{4}\frac{d}{dt}\left[\n(u^n,u^n_t)\n_\h^{2q}\n u^n\n_{1+s}^2\right].
     	\end{align*}
     	The inequality (\ref{3-2-14}) is equivalent to
     	\begin{equation}\label{3-2-15}
     		\begin{split}
     			\frac{d}{dt}\big(t^{1+s}\Phi(u^n)\big)&+\frac{t^{1+s}\gamma}{2}\n(u^n,u_t^n)\n_{\h}^{2q}\n u_t^n\n_{1+s}^2\\
     			&\leq C_R\big(t^{1+s}\Phi(u^n)\big)+t^s|\varphi(t)|+\psi(t)+C.
     		\end{split}
     	\end{equation}
     	Applying Gronwall's lemma to (\ref{3-2-15}) gives
     	\begin{equation}\label{3-2-16}
     		t^{1+s}\Phi(u^n(t))\leq \int_{0}^{t}e^{C_R(t-\tau)}\bigg(t^s|\varphi(\tau)|+\psi(\tau)+C\bigg)d\tau.
     	\end{equation} 
     	We need to estimate $\int_{0}^{t}e^{C_R(t-\tau)}\psi(\tau)d\tau$. Using integration by parts and (\ref{3-2-4})-(\ref{3-2-5}), we have for every $t\in[0,T]$,
     	\begin{align*}
     		&\quad-\int_{0}^{t}\tau^{1+s}e^{C_R(t-\tau)}\frac{d}{d\tau}(f(u^n),A^\frac{s}{2}u^n)d\tau\\
     		&=C_{R,T}-t^{1+s}(f(u^n(t)),A^\frac{s}{2}u^n(t))+\int_{0}^{t}(f(u^n),A^\frac{s}{2}u^n)((1+s)\tau^s-C_R\tau^{1+s})e^{C_R(t-\tau)}d\tau\\
     		&\leq T^{1+s}C_R+C_R\int_{0}^{t}(\tau^s+\tau^{1+s})e^{C_R(t-\tau)}d\tau\\
     		&\leq C_{R,T},
     	\end{align*}
     	\begin{align*}
     		&\quad-\frac{1+s}{2}\int_{0}^{t}\tau^se^{C_R(t-\tau)}\frac{d}{d\tau}(u^n_t,A^\frac{s}{2}u^n)d\tau\\
     		&=-\frac{(1+s)t^s}{2}(u^n_t(t),A^\frac{s}{2}u^n(t))+\frac{1+s}{2}\int_{0}^{t}(u^n_t,A^\frac{s}{2}u^n)(s\tau^{s-1}-C_R\tau^{s})e^{C_R(t-\tau)}d\tau\\
     		&\leq T^{s}C_R+C_R\int_{0}^{t}(\tau^{s-1}+\tau^{s})e^{C_R(t-\tau)}d\tau\\
     		&\leq C_{R,T},
     	\end{align*}
     	\begin{align*}
     		&\quad-\frac{\gamma(1+s)}{4}\int_{0}^{t}\tau^se^{C_R(t-\tau)}\frac{d}{d\tau}\left[\n(u^n,u^n_t)\n_\h^{2q}\n u^n\n_{1+s}^2\right]d\tau\\
     		&=-\frac{\gamma(1+s)t^s}{4}\n(u^n(t),u^n_t(t))\n_\h^{2q}\n u^n(t)\n_{1+s}^2\\
     		&\quad+\frac{\gamma(1+s)}{4}\int_{0}^{t}\n(u^n,u^n_t)\n_\h^{2q}\n u^n\n_{1+s}^2(s\tau^{s-1}-C_R\tau^{s})e^{C_R(t-\tau)}d\tau\\
     		&\leq T^{s}C_R+C_R\int_{0}^{t}(\tau^{s-1}+\tau^{s})e^{C_R(t-\tau)}d\tau\\
     		&\leq C_{R,T}.
     	\end{align*}
     	Inserting these estimates into (\ref{3-2-16}) and using (\ref{3-2-10}), we end up with 
     	\begin{equation}
     		\begin{split}
     			t^{1+s}\Phi(u^n(t))\leq C_{R,T},\qquad \forall 0<t\leq T.
     		\end{split}
     	\end{equation}
     	By the definition of $\Phi$, we get
     	\begin{equation}\label{3-2-18}
     		\n(u^n(t),u^n_t(t))\n_{\h_s}^2\leq \frac{C_{R,T}}{t^{1+s}},\qquad\forall 0<t\leq T.
     	\end{equation}
        Moreover, for any $0<t\leq t'\leq T$, integrate (\ref{3-2-13}) over $[t,t']$ and use (\ref{3-2-4})-(\ref{3-2-5}), we have
        \begin{equation}\label{3-2-19}
        	\int_{t}^{t'}\n(u^n(\tau),u_t^n(\tau))\n_{\h}^{2q}\n u_t^n(\tau)\n_{1+s}^2d\tau\leq \frac{C_{R,T}}{t^{1+s}}.
        \end{equation}
     	
     	\textbf{Existence of weak solutions.} From (\ref{3-2-4})-(\ref{3-2-6})  and (\ref{3-2-18})-(\ref{3-2-19}), we can extract a subsequence (still denoted by itself) such that
     	\begin{equation}\label{lim1}
     		\begin{split}
     			&(u^n,u^n_t)\rightarrow (u,u_t)\ \ \mathrm{weakly^*}\ \ \mathrm{in}\ \ L^\infty(0,T;\h),\\
     			&(u^n,u^n_t)\rightarrow (u,u_t)\ \ \mathrm{weakly^*}\ \ \mathrm{in}\ \ L^\infty(a,T;\h_s)\ \ \mathrm{for}\ \ \mathrm{any}\ \ a>0,\\
     			&\n(u^n,u^n_t)\n_\h^qA^\frac{1}{4}u^n_t\rightarrow\eta\ \ \mathrm{weakly}\ \ \mathrm{in}\ \ L^2(0,T;L^2)\ \ \mathrm{for}\ \ \mathrm{some}\ \ \eta\in L^2(0,T;L^2),\\
     			&\n(u^n,u^n_t)\n_\h^qA^\frac{1+s}{4}u^n_t\rightarrow{\eta^a}\ \ \mathrm{weakly}\ \ \mathrm{in}\ \ L^2(a,T;L^2)\ \ \mathrm{for}\ \ \mathrm{some}\ \ {\eta^a}\in L^2(a,T;L^2),\\
     			&u^n_{tt}\rightarrow u_{tt}\ \ \mathrm{weakly^*}\ \ \mathrm{in}\ \ L^\infty(0,T;V_{-2}).
     		\end{split}
     	\end{equation}
     	Since $\h_s\hookrightarrow\hookrightarrow\h\hookrightarrow\hookrightarrow \h_{-2}$, applying Aubin-Lions Lemma \cite{Lions} yields
     	\begin{equation}\label{lim2}
     		\begin{split}
     			&(u^n,u^n_t)\rightarrow (u,u_t)\ \ \mathrm{strongly}\ \ \mathrm{in}\ \ C([0,T];\h_{-2}),\\
     			&(u^n,u^n_t)\rightarrow (u,u_t)\ \ \mathrm{strongly}\ \ \mathrm{in}\ \ C([a,T];\h)\ \ \mathrm{for}\ \ \mathrm{any}\ \ a>0.
     		\end{split}
     	\end{equation}
     	By the continuity of $f$ and (\ref{3-2-5}), we also have 
     	\begin{equation}\label{lim3}
     		\begin{split}
     			&u^n\rightarrow u\ \ a.e.\ \ \mathrm{in}\ \ \Omega\times[0.T],\\
     			&f(u^n)\rightarrow f(u)\ \ \mathrm{a.e.}\ \ \mathrm{in}\ \ \Omega\times[0,T],\\
     			&f(u^n)\rightarrow f(u)\ \ \mathrm{weakly}\ \ \mathrm{in}\ \ L^2(0,T;L^2).
     		\end{split}
     	\end{equation}
     	In particular, (\ref{lim2}) implies that
     	\begin{equation}\label{lim4}
     		\n(u^n,u^n_t)\n_\h^{2q}\rightarrow\n(u,u_t)\n_\h^{2q}\ \ \mathrm{a.e.}\ \ t\in[0,T].
     	\end{equation}
     	Combining (\ref{lim1}) and (\ref{lim4}), we can prove that
     	\begin{equation}\label{lim5}
     		\n(u^n,u^n_t)\n_\h^{2q}A^\frac{1}{2} u^n_t\rightarrow\n(u,u_t)\n_\h^{2q}A^\frac{1}{2} u_t\ \ \mathrm{weakly^*}\ \ \mathrm{in}\ \ L^\infty(0,T;V_{-2}).
     	\end{equation}
     	Indeed, $\forall \phi\in L^1(0,T;V_2)$, by Lebesgue dominated convergence theorem,
     	\begin{align*}
     		&\quad\bigg|\int_{0}^{T}(\n(u^n,u^n_t)\n_\h^{2q}A^\frac{1}{2} u^n_t-\n(u,u_t)\n_\h^{2q}A^\frac{1}{2} u_t,\phi)dt\bigg|\\
     		&\leq \bigg|\int_{0}^{T}\n(u^n,u^n_t)\n_\h^{2q}(A^\frac{1}{2} u^n_t-A^\frac{1}{2} u_t,\phi)dt\bigg|+\bigg|\int_{0}^{T}\big[\n(u^n,u^n_t)\n_\h^{2q}-\n(u,u_t)\n_\h^{2q}\big](A^\frac{1}{2} u_t,\phi)dt\bigg|\\
     		&\leq C_R\bigg|\int_{0}^{T}( u^n_t- u_t,A^\frac{1}{2}\phi)dt\bigg|+C_{R}\int_{0}^{T}\big|\n(u^n,u^n_t)\n_\h^{2q}-\n(u,u_t)\n_\h^{2q}\big|\n\phi\n_2 dt\\
     		&\rightarrow0,\quad as\quad n\rightarrow\infty.
     	\end{align*}
     	Now, with the aid of the limits (\ref{lim1})-(\ref{lim5}), by passing to the limit $n\rightarrow\infty$ in (\ref{ode}), we obtain that the limit function $(u,u_t)\in L^\infty(0,T;\h)$ satisfies $$(u(0),u_t(0))\overset{\h_{-2}}{=}\lim_{n\rightarrow\infty}(u^n_0,u^n_1)=(u_0,u_1)$$ and Eq. (\ref{eq}) in the sense that
     	\[u_{tt}+\kappa A^\frac{1}{2}u+Au+\gamma\n(u,u_t)\n_{\h}^{2q}A^\frac{1}{2}u_t+f(u)=0\ \ in\ \ L^\infty(0,T;V_{-2}).\]
     	Thus, $u(t), t\in[0,T]$ is a weak solution to problem (\ref{eq}) and $(u,u_t)\in L^\infty(a,T;\h_s),\forall 0<a<T$ by (\ref{lim1}). 
     	\\
     	
     	\textbf{(ii)} We will show that $\n(u,u_t)\n_\h^qA^\frac{1}{4}u_t=\eta$ and $\n(u,u_t)\n_\h^qA^\frac{1+s}{4}u_t=\eta^a$ with $\eta,\eta^a$ coming from (\ref{lim1}). Indeed, for $\forall\phi\in L^2(0,T;V_1)$, by (\ref{lim4}) and Lebesgue dominated convergence theorem,
     	\begin{align*}
     		&\quad\bigg|\int_{0}^{T}(\n(u^n,u^n_t)\n_\h^{q}A^\frac{1}{4} u^n_t-\n(u,u_t)\n_\h^{q}A^\frac{1}{4} u_t,\phi)dt\bigg|\\
     		&\leq \bigg|\int_{0}^{T}\n(u^n,u^n_t)\n_\h^{q}(A^\frac{1}{4} u^n_t-A^\frac{1}{4} u_t,\phi)dt\bigg|+\bigg|\int_{0}^{T}\big[\n(u^n,u^n_t)\n_\h^{q}-\n(u,u_t)\n_\h^{q}\big](A^\frac{1}{4} u_t,\phi)dt\bigg|\\
     		&\leq C_R\bigg|\int_{0}^{T}( u^n_t- u_t,A^\frac{1}{4}\phi)dt\bigg|+C_{R}\int_{0}^{T}\big|\n(u^n,u^n_t)\n_\h^{q}-\n(u,u_t)\n_\h^{q}\big|\n\phi\n_1 dt\\
     		&\rightarrow0,\quad as\quad n\rightarrow\infty,
     	\end{align*}
        which ipmlies that $\n(u^n,u^n_t)\n_\h^qA^\frac{1}{4}u^n_t\rightarrow\n(u,u_t)\n_\h^qA^\frac{1}{4}u_t$ weakly in $L^2(0,T,V_{-1})$. By the uniqueness of weak limit, we obtain that $\n(u,u_t)\n_\h^qA^\frac{1}{4}u_t=\eta\in L^2(0,T;L^2)$. Similarly, for any $a>0$, $\forall\phi\in L^2(a,T;V_{1+s})$, by (\ref{lim4}) and Lebesgue dominated convergence theorem,
        \begin{align*}
        	&\quad\bigg|\int_{a}^{T}(\n(u^n,u^n_t)\n_\h^{q}A^\frac{1+s}{4} u^n_t-\n(u,u_t)\n_\h^{q}A^\frac{1+s}{4} u_t,\phi)dt\bigg|\\
        	&\leq \bigg|\int_{a}^{T}\n(u^n,u^n_t)\n_\h^{q}(A^\frac{1+s}{4} u^n_t-A^\frac{1+s}{4} u_t,\phi)dt\bigg|+\bigg|\int_{a}^{T}\big[\n(u^n,u^n_t)\n_\h^{q}-\n(u,u_t)\n_\h^{q}\big](A^\frac{1+s}{4} u_t,\phi)dt\bigg|\\
        	&\leq C_R\bigg|\int_{a}^{T}( u^n_t- u_t,A^\frac{1+s}{4}\phi)dt\bigg|+C_{R}\int_{a}^{T}\big|\n(u^n,u^n_t)\n_\h^{q}-\n(u,u_t)\n_\h^{q}\big|\n\phi\n_{1+s} dt\\
        	&\rightarrow0,\quad as\quad n\rightarrow\infty,
        \end{align*}
        which ipmlies that $\n(u^n,u^n_t)\n_\h^qA^\frac{1+s}{4}u^n_t\rightarrow\n(u,u_t)\n_\h^qA^\frac{1+s}{4}u_t$ weakly in $L^2(a,T,V_{-1-s})$. By the uniqueness of weak limit, we obtain that $\n(u,u_t)\n_\h^qA^\frac{1+s}{4}u_t=\eta^a\in L^2(a,T;L^2)$.
        
        Then, due to the lower semicontinuity of the weak$^*$ limit, we can deduce from (\ref{3-2-4}), (\ref{3-2-18}), (\ref{3-2-19}) and (\ref{lim1}) that
        \begin{equation*}
        	\n(u(t),u_t(t))\n_{\h}^2+\int_{0}^{t}\n(u(\tau),u_t(\tau))\n_{\h}^{2q}\n u_t(\tau)\n_1^2d\tau\leq C_R,\ \forall t\in[0,T],
        \end{equation*}
        \begin{equation*}
        	\n(u(t),u_t(t))\n_{\h_s}^2+\int_{t}^{t'}\n(u(\tau),u_t(\tau))\n_{\h}^{2q}\n u_t(\tau)\n_{1+s}^2d\tau
        	\leq \frac{C_{R,T}}{t^{1+s}},\ \forall 0<t\leq t'\leq T,
        \end{equation*}
     	\\
     		
     	\textbf{(iii)} From (\ref{3-3}) and (\ref{3-7}), we have
     	\begin{align*}
     		u_t\in L^2(0,T; L^2),\qquad
     		f(u)\in L^2(0,T;L^2).
     	\end{align*}
     	Then, approximating $u$ by smooth functions and arguing in a standard way, we see that for every $0\leq \tau\leq t\leq T$, 
     	\begin{equation}\label{3-2-21}
     		\begin{split}
     			\int_{\Omega}F(u(t))dx-\int_{\Omega}F(u(\tau))dx=\int_{\tau}^{t}(f(u),u_t)ds.
     		\end{split}
     	\end{equation}

     	We are now ready to prove the energy equality of $u$. To this end, we take $u^m(t)=P_mu(t)$, where $P_m$ is the orthoprojector to $\mathrm{Span}\{e_1,\cdots,e_m\}$ with $\{e_j\}_{j=1}^m$ the first $m$ eigenvalues of the operator $A$. Then, $u^m$ solves
     	\begin{equation}\label{3-2-22}
     		u^m_{tt}+\kappa A^\frac{1}{2}u^m+Au^m+\gamma\n(u,u_t)\n_{\h}^{2q}A^\frac{1}{2}u^m_t+P_mf(u)=0
     	\end{equation}
     	For every $0\leq\tau\leq t\leq T$, take the multiplier $u_t^m$ in (\ref{3-2-22}) and integrate over $[\tau,t]$, we get
     	\begin{equation}\label{3-2-23}
     		\begin{split}
     			&\frac{1}{2}\n (u^m(t),u^m_t(t))\n_\h^2+\frac{\kappa}{2}\n u^m(t)\n_1^2-\frac{1}{2}\n (u^m(\tau),u^m_t(\tau))\n_\h^2-\frac{\kappa}{2}\n u^m(\tau)\n_1^2\\
     			&\qquad +\gamma\int_{\tau}^{t}\n(u,u_t)\n_\h^{2q}\n A^\frac{1}{4}u_t^m\n^2ds+\int_{\tau}^{t}(P_mf(u),u_t^m)ds.
     		\end{split}
     	\end{equation}
     	Since $(u,u_t)\in L^\infty(\tau,t;\h),\ f(u)\in L^2(\tau,t;L^2),\ \n(u,u_t)\n_\h^qA^\frac{1}{4}u_t\in L^2(0,T;L^2)$, we have
     	\begin{align*}
     		&(u^m(t),u^m_t(t))\rightarrow(u(t),u_t(t)),\ (u^m(\tau),u^m_t(\tau))\rightarrow(u(\tau),u_t(\tau))\ \ \mathrm{in}\ \ \h;\\
     		&u^m(t)\rightarrow u(t),\ u^m(\tau)\rightarrow u(\tau)\ \ \mathrm{in}\ \ V_1;\\
     		&u_t^m\rightarrow u_t,\ \n(u,u_t)\n_\h^qA^\frac{1}{4}u^m_t\rightarrow\n(u,u_t)\n_\h^qA^\frac{1}{4}u_t\ \ \mathrm{in}\ \ L^2(\tau,t;L^2).
     	\end{align*}
     	Now, passing to the limit $m\rightarrow\infty$ in (\ref{3-2-23}) and with the help of (\ref{3-2-21}), we end up with
     	\begin{equation*}
     		E(t)-E(\tau)+\gamma\int_{\tau}^{t}\n(u,u_t)\n_\h^{2q}\n u_t\n_1^2=0,
     	\end{equation*}
     	where 
     	\begin{equation*}
     		E(t)=\frac{1}{2}\n(u(t),u_t(t))\n_\h^2+\frac{\kappa}{2}\n u(t)\n_1^2+\int_{\Omega}F(u(t))dx.
     	\end{equation*}
     	Thus, (\ref{energy}) holds for $u$. From the energy equality, the function $t\mapsto\n (u(t),u_t(t))\n_\h$ is continuous. Moreover, since $(u,u_t)\in L^\infty(0,T;\h)\cap C([0,T],\h_{-2})$ and $\h\hookrightarrow\h_{-2}$ is reflexive, we have $(u,u_t)\in C_w([0,T];\h)$. Then, the uniform convexity of $\h$ gives that $(u,u_t)\in C([0,T];\h)$.
    \end{proof}
     	
    We prove the following lemma in order to verify the uniqueness of weak solutions.
    	
    \begin{lemma}\label{lemma1}
    	For any fixed $R>0$ and for any compact set $K$ with 
    	\[K\subset\Big\{\xi\in\h\ |\ \frac{R}{2}\leq\n\xi\n_\h\leq R\Big\},\]
    	there exists a constant $k=k(R,T,K)>0$ such that the inequality
    	\begin{equation}\label{3-2-30}
    		\n u(t),u_t(t)\n_\h\geq k,\quad \forall t\in[0,T]
    	\end{equation}
    	holds uniformly for any weak solution $u(t), t\in[0,T]$ with its initial data $(u_0,u_1)\in K$.
    \end{lemma}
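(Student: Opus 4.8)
The plan is to track the scalar quantity $y(t):=\n(u(t),u_t(t))\n_\h^2$ along a weak solution and to show it cannot reach $0$ on $[0,T]$, with a bound uniform over $K$. By the energy equality (\ref{energy}) together with (\ref{3-2-21}), $y$ is absolutely continuous on $[0,T]$, and testing (\ref{eq}) by $u_t$ yields
\[
y'(t)=-2\gamma\n(u,u_t)\n_\h^{2q}\n u_t\n_1^2-2\kappa(A^{\frac12}u,u_t)-2(f(u),u_t)\qquad\text{a.e.}
\]
Using the a priori bound $\n(u(t),u_t(t))\n_\h\le M(R)$ on $[0,T]$ from (\ref{3-7}), the growth condition (\ref{3-1}) (which gives $\n f(u(t))\n\le C_R\n u(t)\n_2$ on $[0,T]$), and Young's inequality, the last two terms are bounded in absolute value by $C_R\,y(t)$; hence, since $\n(u,u_t)\n_\h^{2q}=y^q$, on any subinterval where $y>0$ one has
\[
\frac{d}{dt}\ln y(t)\ \ge\ -C_R-2\gamma\,y(t)^{q-1}\n u_t(t)\n_1^2\qquad\text{a.e.}
\]

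The decisive step is the estimate: for $s=\frac1q$ and all $0<a\le t'\le T$,
\[
\int_a^{t'}y(\tau)^{q-1}\n u_t(\tau)\n_1^2\,d\tau\ \le\ Q_2\,T^{s}\,a^{-(1+s)} ,
\]
with $Q_2=Q_2(\n(u_0,u_1)\n_\h,T)$ from (\ref{3-8}). For $q>1$ I would interpolate $\n u_t\n_1\le\n u_t\n_s^{\,s}\n u_t\n_{1+s}^{\,1-s}$ via (\ref{2-3}), insert the pointwise bound $\n u_t(\tau)\n_s^{2}\le\tau^{-(1+s)}Q_2$ coming from (\ref{3-8}), then use the algebraic identity $qs=1$ to rewrite $y^{q-1}\n u_t\n_{1+s}^{2(1-s)}=(y^{q}\n u_t\n_{1+s}^{2})^{1-s}$, and finish with Hölder's inequality (exponents $\frac1{1-s},\frac1s$) and the integral bound $\int_a^{t'}y^{q}\n u_t\n_{1+s}^{2}\,d\tau\le a^{-(1+s)}Q_2$ from (\ref{3-8}); the case $q=1$ is immediate from the pointwise bound $\n u_t(\tau)\n_1^2\le\tau^{-2}Q_2$ contained in (\ref{3-8}). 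I expect this to be the main obstacle of the whole argument: $\n u_t\n_1$ is \emph{not} controlled pointwise near $t=0$, so one must exploit the weighted, integrated form of the smoothing estimate, and the precise balance $s=\frac1q$ between the degeneracy exponent and the gained regularity is exactly what makes the exponents close.

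With these two facts I would first show that \emph{no} weak solution with $\n(u_0,u_1)\n_\h\neq0$ can vanish on $[0,T]$. Indeed, if $(u(t_0),u_t(t_0))=(0,0)$ for some $t_0$, then $t_0>0$ (at $t_0=0$ the datum would already be zero); taking $t_0$ to be the first zero of the continuous function $y$, we have $y>0$ on $[0,t_0)$, so integrating the second displayed inequality from $t_0/2$ to $b\in(t_0/2,t_0)$ and applying the decisive estimate with $a=t_0/2$ gives
\[
\ln y(b)\ \ge\ \ln y(t_0/2)-C_RT-2\gamma\,Q_2T^{s}(t_0/2)^{-(1+s)} ,
\]
a finite constant independent of $b$; letting $b\uparrow t_0$ then forces $\ln y(b)\to-\infty$, a contradiction.

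Finally I would obtain (\ref{3-2-30}) by contradiction. If no $k>0$ works, pick weak solutions $u^m$ with $(u^m_0,u^m_1)=\xi_m\in K$ and times $t_m\in[0,T]$ with $\n(u^m(t_m),u^m_t(t_m))\n_\h\to0$. Since $K$ is compact, after passing to a subsequence $\xi_m\to\xi_*\in K$ (so $\n\xi_*\n_\h\ge R/2$) and $t_m\to t_*\in[0,T]$; since $K$ is bounded, the a priori estimates of Theorem \ref{well-posedness} are uniform in $m$, so, exactly as in its proof (Aubin–Lions together with the limit passage for the nonlocal damping term), a further subsequence of $u^m$ converges to a weak solution $u$ with datum $\xi_*$, strongly in $C([0,T];\h_{-2})$ and in $C([a,T];\h)$ for every $a>0$. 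If $t_*>0$, passing to the limit in $\h$ along $t=t_m$ yields $(u(t_*),u_t(t_*))=(0,0)$ with $\xi_*\neq0$, contradicting the previous paragraph; if $t_*=0$, passing to the limit in $\h_{-2}$ yields $\xi_*=\lim_m(u^m(t_m),u^m_t(t_m))=0$, contradicting $\n\xi_*\n_\h\ge R/2$. Hence the required $k=k(R,T,K)>0$ exists.
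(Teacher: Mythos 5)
Your argument is correct and follows essentially the same route as the paper: a forward Gronwall bound from testing with $u_t$, combined with the smoothing estimate (\ref{3-8}), the interpolation between $\n u_t\n_s$ (or $\n u_t\n$) and $\n u_t\n_{1+s}$ with the exact balance $s=\tfrac1q$, and H\"older in time to show the damping rate is integrable on $[a,T]$, followed by the compactness/contradiction argument over $K$. The only (cosmetic) differences are your choice of interpolation endpoints and your endgame — an intermediate ``no finite-time extinction'' step with a case split at $t_*=0$ versus $t_*>0$ — where the paper instead propagates the smallness forward to the fixed time $T$ and concludes there by weak lower semicontinuity.
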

    \begin{proof}
    	Firstly, we prove a useful result for every weak solution. Let $r\geq0$ be arbitrary and $u(t), t\in[0,T]$ be a weak solution of problem \ref{eq} with its initial data $\n(u(0),u_t(0))\n_\h\leq r$. Multiplying (\ref{eq}) by $u_t$ and integrating over $x\in\Omega$ (which is justified as in Theorem \ref{well-posedness} (iii)), we get
    	\begin{equation}\label{3-2-36}
    		\frac{d}{dt}\e(u)+2\gamma\n (u,u_t)\n_\h^{2q}\n u_t\n_1^2+2(f(u),u_t)=0,
    	\end{equation}
    	where $\e(u)=\n u_t\n^2+\kappa\n u\n_{1}^2+\n u\n_{2}^2$ satisfying $\n (u,u_t)\n_\h^{2}\leq \e(u)\leq C_0\n (u,u_t)\n_\h^{2}$ with the constant $C_0=1+\kappa/(\lambda_1^{1/2})>0$. By (\ref{3-3}), (\ref{3-7}),  H\"{o}lder's inequality with $\frac{2}{n}+\frac{n-4}{2n}+\frac{1}{2}=1$ and the Sobolev embedding $V_2\hookrightarrow L^{\frac{2n}{n-4}}\hookrightarrow L^{\frac{np}{2}}$, we have
    	\begin{equation}\label{3-2-35}
    		\begin{split}
    			\Big|2(f(u),u_t)\Big|
    			&\leq C((1+|u|^p)|u|,|u_t|)\\
    			&\leq C(1+\n |u|^p\n_{L^\frac{n}{2}})\n u\n_{L^\frac{2n}{n-4}}\n u_t\n\\
    			&\leq C(1+\n u\n^p_2)\n u\n_{2}\n u_t\n\\
    			&\leq C_r\n (u,u_t)\n_\h^{2}\\
    			&\leq C_r\e(u).
    		\end{split}
    	\end{equation}
    	Inserting (\ref{3-2-35}) into (\ref{3-2-36}), we obtain
    	\begin{equation}\label{3-2-50}
    		\frac{d}{dt}\e(u)+2\gamma\n (u,u_t)\n_\h^{2q}\n u_t\n_1^2\leq C_r\e(u).
    	\end{equation}
    	Applying Gronwall's lemma to (\ref{3-2-50}) and using the definition of $\e(u)$, we see that for every $0\leq \tau\leq t\leq T$,
    	\begin{equation}\label{3-2-51}
    		\n(u(t),u_t(t))\n^2_\h\leq \e(u(t))\leq \e(u(\tau))e^{C_r(t-\tau)}\leq C_{r,T}\n(u(\tau),u_t(\tau))\n_\h^2. 
    	\end{equation}
    	
    	Now, we prove the lemma by contradiction. Suppose that (\ref{3-2-30}) does not hold true. Then for $n\geq 1$, there exist $t_n\in[0,T]$ and weak solutions ${u^n}$ with initial data $({u^n}(0),u^n_t(0))\in K$, such that
    	\begin{equation}\label{3-2-53}
    		\n(u^n(t_n),u_t^n(t_n))\n_\h<\frac{1}{n}.
    	\end{equation}
    	By (\ref{3-2-51})and (\ref{3-2-53}), we have
    	\begin{equation}\label{3-2-54}
    		\n(u^n(T),u_t^n(T))\n_\h\leq C_{R,T}\n(u^n(t_n),u_t^n(t_n))\n_\h<\frac{C_{R,T}}{n},\quad n\geq1.
    	\end{equation}
    	Since $K$ is compact, we may assume that $(u^n(0),u^n_t(0))\rightarrow(u_0,u_1)\in K$ in $\h$. Arguing exactly as in the proof of Theorem \ref{well-posedness} (i), we deduce that $\{u^n\}_{n\geq1}$ (subsequence if necessary) converges to some weak solution $u$ satisfying
    	\begin{equation}\label{3-2-55}
    		\begin{split}
    			&(u^n,u^n_t,u^n_{tt})\rightarrow ({u},{u}_t,u_{tt})\ \ \mathrm{weakly^*}\ \ \mathrm{in}\ \ L^\infty(0,T;V_2\times L^2\times V_{-2}),\\
    			&(u(0),u_t(0))=(u_0,u_1)\in K,
    		\end{split}
    	\end{equation}
    	By the weak$^*$ convergence in (\ref{3-2-55}), we have the weak convergence $(u^n(t),u^n_t(t))\rightharpoonup(u(t),u_t(t))$ in $\h$ for every $t\in[0,T]$. Consequently, due to (\ref{3-2-54}) and the lower semicontinuity of the weak limit, we have
    	\begin{equation}\label{3-2-56}
    		\n(u(T),u_t(T))\n_\h\leq \liminf_{n\rightarrow\infty}\n(u^n(T),u^n_t(T))\n_\h\leq \liminf_{n\rightarrow\infty}\frac{C_{R,T}}{n}=0.
    	\end{equation}
    	
    	On the other hand, since $\frac{R}{2}\leq \n(u(0),u_t(0))\n_\h\leq R$ and $t\mapsto\n(u(t),u_t(t))\n_\h$ is continuous due to the energy equality (\ref{energy}), there exists some $0<a\leq T$ such that
     	\begin{equation}\label{3-2-31}
     		\n(u(t),u_t(t))\n_\h\geq \frac{R}{4},\quad\forall t\in[0,a].
     	\end{equation}
     	Since estimate (\ref{3-8}) holds for $u$, we have
     	\begin{equation}\label{3-2-33}
     		\n(u(t),u_t(t))\n^2_{\h_s}+\int_{a}^{t}\n(u,u_t)\n_\h^{2q}\n u_t\n_{1+s}^2d\tau\leq a^{-(1+s)}C_{R,T},\quad \forall t\in[a,T],
     	\end{equation}
     	where $s=\frac{1}{q}$. 
        By interpolation inequality (\ref{2-3}), we have
     	\[\n u_t\n_1\leq \n u_t\n^\frac{s}{1+s}\n u_t\n_{1+s}^\frac{1}{1+s},\]
     	and then 
     	\begin{equation}\label{3-2-34}
     		\begin{split}
     			\n (u,u_t)\n_\h^{2q}\n u_t\n_1^2&\leq \n (u,u_t)\n_\h^{2q}\n u_t\n^\frac{2s}{1+s}\n u_t\n_{1+s}^\frac{2}{1+s}\\
     			&\leq \n (u,u_t)\n_\h^{\frac{2}{s}+\frac{2s}{1+s}}\n u_t\n_{1+s}^\frac{2}{1+s}\\
     			&=\n (u,u_t)\n_\h^{2}\Big(\n (u,u_t)\n_\h^{\frac{2}{s}}\n u_t\n_{1+s}^2\Big)^\frac{1}{1+s}.
     		\end{split}
     	\end{equation}
     	Inserting (\ref{3-2-35}) and (\ref{3-2-34}) into (\ref{3-2-36}), we obtain
     	\begin{equation}\label{3-2-37}
     		\frac{d}{dt}\e(u)+\bigg[2\gamma\Big(\n (u,u_t)\n_\h^{2q}\n u_t\n_{1+s}^2\Big)^\frac{q}{1+q}+C_R\bigg]\e(u)\geq 0.
     	\end{equation}
     	 Integrating (\ref{3-2-37}) over $[a,t]$ yields
     	 \begin{equation}\label{3-2-38}
     	 	\e(u(t))\geq \e(u(a))e^{-\int_{a}^{t}h(\tau)d\tau},\quad\forall t\in[a,T],
     	 \end{equation}
     	 where $h(t)=2\gamma\Big(\n (u,u_t)\n_\h^{2q}\n u_t\n_{1+s}^2\Big)^\frac{q}{1+q}+C_R$. By (\ref{3-2-33}) and H\"{o}lder's inequality, we have
     	 \begin{equation}\label{3-2-39}
     	 	\begin{split}
     	 		\int_{a}^{t}h(\tau)d\tau&=2\gamma\int_{a}^{t}\Big(\n (u,u_t)\n_\h^{2q}\n u_t\n_{1+s}^2\Big)^\frac{q}{1+q}d\tau+C_R(t-a)\\
     	 		&\leq 2\gamma\Big(\int_{a}^{t}\n (u,u_t)\n_\h^{2q}\n u_t\n_{1+s}^2d\tau\Big)^\frac{q}{1+q}(t-a)^\frac{1}{1+q}+C_R(t-a)\\
     	 		&\leq \frac{C_{R,T}}{a}(t-a)^\frac{1}{1+q}+C_R(t-a).
     	 	\end{split}
     	 \end{equation}
     	Inserting (\ref{3-2-31}) and (\ref{3-2-39}) into (\ref{3-2-38}), we obtain
     	\begin{equation*}
     		\begin{split}
     			\n (u(t),u_t(t))\n_\h^2&\geq\frac{1}{C_0}\e(u(t))\\
     			&\geq\frac{1}{C_0}\e(u(a))\exp\Big\{-\frac{C_{R,T}}{a}(t-a)^\frac{1}{1+q}-C_R(t-a)\Big\}\\
     			&\geq\frac{1}{C_0}\n (u(a),u_t(a))\n_\h^2\exp\Big\{-\frac{C_{R,T}}{a}(t-a)^\frac{1}{1+q}-C_R(t-a)\Big\}\\
     			&\geq\frac{R^2}{16C_0}\exp\Big\{-\frac{T^\frac{1}{1+q}C_{R,T}}{a}-TC_R\Big\},\quad\forall t\in[a,T].
     		\end{split}
     	\end{equation*}
     	In particular,
     	\begin{equation}\label{3-2-40}
     		\n u(T),u_t(T)\n_\h\geq\frac{R}{4\sqrt{C_0}}\exp\Big\{-\frac{T^\frac{1}{1+q}C_{R,T}}{2a}-\frac{TC_R}{2}\Big\}>0,
     	\end{equation}
     	which contradicts with (\ref{3-2-56}). Therefore, (\ref{3-2-30}) holds true.
    \end{proof}

    \begin{thm}\label{uniqueness}
    	Let the assumptions of Theorem \ref{well-posedness} hold. Then the weak solution is unique. Specifically, for any $R>0$ and any compact set $K$ with 
    	\[K\subset\Big\{\xi\in\h\ |\ \frac{R}{2}\leq\n\xi\n_\h\leq R\Big\},\]
    	if $\xi_u(t)=(u(t),u_t(t))$ and $\xi_v(t)=(v(t),v_t(t))$ are weak solutions corresponding to the initial data $\xi_u(0)\in K$, $\xi_v(0)\in K$, respectively, then
    	\begin{equation}\label{unique}
    		\n\xi_{u}(t)-\xi_{v}(t)\n_{\h}\leq C\n\xi_{u}(0)-\xi_{v}(0)\n_{\h},\quad \forall t\in[0,T],
    	\end{equation}
    	where the constant $C=C(R,T,K)>0$. Moreover, if the initial data satisfies $\n\xi_u(0)\n_\h=0$, then $u\equiv0$ is the only solution.
    \end{thm}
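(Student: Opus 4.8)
The plan is to reduce the uniqueness question to the non-degenerate case by means of Lemma \ref{lemma1}, and then to run the standard energy estimate for the difference of two solutions, keeping careful track of the extra term coming from the difference of the solution-dependent damping coefficients.

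First I would extract the quantitative consequences of Lemma \ref{lemma1}. Since $\xi_u(0),\xi_v(0)\in K$, the lemma provides a constant $k=k(R,T,K)>0$ with $\n\xi_u(t)\n_\h,\n\xi_v(t)\n_\h\ge k$ on $[0,T]$, while (\ref{3-7}) gives the upper bound $\n\xi_u(t)\n_\h,\n\xi_v(t)\n_\h\le C_R$ on $[0,T]$. Hence, along these two trajectories, the damping coefficient $\gamma\n\xi_u\n_\h^{2q}$ ranges in $[\gamma k^{2q},\gamma C_R^{2q}]$; in particular it is bounded \emph{below} by a positive constant, so the damping is effectively non-degenerate. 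Using the lower bound once more in (\ref{3-7}) yields $\int_0^T\n u_t(\tau)\n_1^2\,d\tau\le k^{-2q}Q_1(R)$ and likewise for $v$, so $u_t,v_t\in L^2(0,T;V_1)$ and thus $w_t:=u_t-v_t\in L^2(0,T;V_1)$; this time-integrability is precisely what will make the Gronwall step close.

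Next, set $w=u-v$, $\xi_w=(w,w_t)$ and $\e(w)=\n w_t\n^2+\kappa\n w\n_1^2+\n w\n_2^2$, so that $\n\xi_w\n_\h^2\le\e(w)\le C_0\n\xi_w\n_\h^2$ with $C_0=1+\kappa\lambda_1^{-1/2}$. Subtracting the equations for $u$ and $v$ and testing with $w_t$ — which I would justify as in the proof of the energy equality in Theorem \ref{well-posedness}(iii), i.e. via the Galerkin truncation $w^m=P_mw$ (which solves the $P_m$-projection of the difference equation, cf. (\ref{3-2-22})) and then $m\to\infty$, using $(u,u_t),(v,v_t)\in C([0,T];\h)$, $w_t\in L^2(0,T;V_1)$ and $f(u)-f(v)\in L^2(0,T;L^2)$ — and decomposing $\n\xi_u\n_\h^{2q}A^{\frac12}u_t-\n\xi_v\n_\h^{2q}A^{\frac12}v_t=\n\xi_u\n_\h^{2q}A^{\frac12}w_t+(\n\xi_u\n_\h^{2q}-\n\xi_v\n_\h^{2q})A^{\frac12}v_t$, one arrives at
\begin{align*}
\frac12\frac{d}{dt}\e(w)+\gamma\n\xi_u\n_\h^{2q}\n w_t\n_1^2
&=-\gamma\big(\n\xi_u\n_\h^{2q}-\n\xi_v\n_\h^{2q}\big)\big(A^{\frac14}v_t,A^{\frac14}w_t\big)\\
&\quad-(f(u)-f(v),w_t).
\end{align*}
The source term is controlled exactly as in (\ref{3-2-35}): by the Mean Value Theorem, (\ref{3-1}), H\"older's inequality with $\frac2n+\frac{n-4}{2n}+\frac12=1$, and the embeddings $V_2\hookrightarrow L^{2n/(n-4)}\cap L^{np/2}$, one gets $|(f(u)-f(v),w_t)|\le C_R\n w\n_2\n w_t\n\le C_R\,\e(w)$. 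For the damping-difference term, the local Lipschitz continuity of $r\mapsto r^{2q}$ on $[0,C_R]$ together with the reverse triangle inequality in $\h$ gives $|\n\xi_u\n_\h^{2q}-\n\xi_v\n_\h^{2q}|\le C_R\n\xi_w\n_\h$, whence by Cauchy--Schwarz and Young's inequality it is bounded by $\frac{\gamma k^{2q}}{2}\n w_t\n_1^2+C_{R,T,K}\n v_t\n_1^2\,\e(w)$. Since $\gamma\n\xi_u\n_\h^{2q}\n w_t\n_1^2\ge\gamma k^{2q}\n w_t\n_1^2$, the two dissipative terms combine favourably, and discarding the remaining nonnegative quantity we reach $\frac{d}{dt}\e(w)\le g(t)\e(w)$ with $g(t)=2C_{R,T,K}\n v_t(t)\n_1^2+2C_R\in L^1(0,T)$. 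Gronwall's lemma then gives $\e(w)(t)\le\e(w)(0)\,e^{\int_0^T g(\tau)\,d\tau}\le C(R,T,K)\,\e(w)(0)$ for all $t\in[0,T]$, which is (\ref{unique}). Finally, if $\xi_u(0)=0$, then $u\equiv0$ is clearly a weak solution, and any weak solution with $\xi_u(0)=0$ satisfies $\n\xi_u(t)\n_\h^2\le C\n\xi_u(0)\n_\h^2=0$ by (\ref{3-2-51}) (valid for every weak solution, here with $r=0$ and $\tau=0$), hence $u\equiv0$.

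I expect the \textbf{main obstacle} to be the cross term $\gamma(\n\xi_u\n_\h^{2q}-\n\xi_v\n_\h^{2q})(A^{\frac14}v_t,A^{\frac14}w_t)$: it cannot be absorbed by $\e(w)$ alone, because it carries the weight $\n v_t\n_1^2$ which is not uniformly bounded in time, and it can only be handled by pairing the positive dissipation $\gamma k^{2q}\n w_t\n_1^2$ (available exactly because of the lower bound from Lemma \ref{lemma1}) with a Gronwall argument whose coefficient is the $L^1(0,T)$ function $\n v_t\n_1^2$ — and the integrability $\int_0^T\n v_t\n_1^2\,d\tau<\infty$ is itself a consequence of Lemma \ref{lemma1} combined with (\ref{3-7}). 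In short, once the lower bound of Lemma \ref{lemma1} is available the degeneracy disappears and the remainder is a routine difference estimate; the real work sits in that lemma.
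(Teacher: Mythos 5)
Your proposal is correct and follows essentially the same route as the paper: both reduce to the non-degenerate case via the lower bound $k$ from Lemma \ref{lemma1}, decompose the damping difference as $\n\xi_u\n_\h^{2q}A^{\frac12}w_t+(\n\xi_u\n_\h^{2q}-\n\xi_v\n_\h^{2q})A^{\frac12}v_t$, absorb half of $\gamma k^{2q}\n w_t\n_1^2$ into the dissipation, and close Gronwall with the $L^1(0,T)$ weight $\n v_t\n_1^2$ obtained from the energy identity and the lower bound. The treatment of the zero-initial-data case via (\ref{3-2-51}) also matches the paper.
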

    \begin{proof}
     	Firstly, if $\n\xi_u(0)\n_\h=0$, we deduce from (\ref{3-2-51}) that
     	\begin{equation}
     		\n(u(t),u_t(t))\n^2_\h\leq C_T\n\xi_u(0)\n_\h^2=0.\quad\forall t\in[0,T].
     	\end{equation}
     	Thus $(u,u_t)\equiv0$ is the only solution.
     	
     	For fixed $R>0$, let $K\subset\{\xi\in\h\ |\ \frac{R}{2}\leq\n\xi\n_\h\leq R\}$ be an arbitrary compact set and $\xi_u(t)=(u(t),u_t(t))$ , $\xi_v(t)=(v(t),v_t(t))$ be weak solutions corresponding to the initial data $\xi_u(0)\in K$, $\xi_v(0)\in K$, respectively. By estimate (\ref{3-7}) and Lemma \ref{lemma1},
     	\begin{equation}\label{3-6-1}
     		k\leq\n(u(t),u_t(t))\n_\h\leq C_R,\quad k\leq\n(v(t),v_t(t))\n_\h\leq C_R,\quad\forall t\in[0,T],
     	\end{equation}
     	where $k=k(R,T,K)>0$. Then, from the energy equality (\ref{energy}), we have
     	\begin{equation}\label{3-6-2}
     		\int_{0}^{t}\n u_t\n_1^2d\tau\leq \frac{C_R}{\gamma k^{2q}},\quad \int_{0}^{t}\n v_t\n_1^2d\tau\leq \frac{C_R}{\gamma k^{2q}},\quad\forall t\in[0,T].
     	\end{equation}
     	The function $w(t)=u(t)-v(t)$ solves the following equation
     	\begin{equation}\label{eqw}
     		\begin{cases}
     			w_{tt}+\kappa A^\frac{1}{2}w+Aw+\gamma\n(u,u_t)\n_\h^{2q}A^\frac{1}{2}w_t\\
     			\qquad\qquad+\gamma\Big[\n(u,u_t)\n_\h^{2q}-\n(v,v_t)\n_\h^{2q}\Big]A^\frac{1}{2}v_t+f(u)-f(v)=0,\\
     			(w(0),w_t(0))=\xi_u(0)-\xi_v(0).
     		\end{cases}
     	\end{equation}
     	Multiplying (\ref{eqw}) by $w_t$, we get
     	\begin{equation}\label{3-6-3}
     		\begin{split}
     			\frac{d}{dt}\e(w)+2\gamma\n (u,u_t)\n_\h^{2q}\n w_t\n_1^2=I_1+I_2,
     		\end{split}
     	\end{equation}
     	where $\e(w)=\n w_t\n^2+\kappa\n w\n_{1}^2+\n w\n_{2}^2$ satisfying $\n (w,w_t)\n_\h^{2}\leq \e(w)\leq C_0\n (w,w_t)\n_\h^{2}$ with the constant $C_0=1+\kappa/(\lambda_1^{1/2})>0$ and
     	\begin{align*}
     		&I_1=-2\gamma\Big[\n(u,u_t)\n_\h^{2q}-\n(v,v_t)\n_\h^{2q}\Big](A^\frac{1}{4}v_t,A^\frac{1}{4}w_t),\\
     		&I_2=-2(f(u)-f(v),w_t).
     	\end{align*}
     	From (\ref{3-6-1}), using mean value theorem and Young's inequality, we have
     	\begin{equation}\label{3-6-4}
     		\begin{split}
     			|I_1|&\leq 2q\gamma\Big[\n(u,u_t)\n_\h^{2(q-1)}+\n(v,v_t)\n_\h^{2(q-1)}\Big]\Big(\n(u,u_t)\n_\h^{2}-\n(v,v_t)\n_\h^{2}\Big)\n v_t\n_1\n w_t\n_1\\
     			&\leq C_R\Big(\n(u,u_t)\n_\h+\n(v,v_t)\n_\h\Big)\Big(\n(u,u_t)\n_\h-\n(v,v_t)\n_\h\Big)\n v_t\n_1\n w_t\n_1\\
     			&\leq C_R\n(w,w_t)\n_\h\n v_t\n_1\n w_t\n_1\\
     			&\leq \gamma k^{2q}\n w_t\n_1^2+C_{k,R}\n(w,w_t)\n_\h^2\n v_t\n_1^2\\
     			&\leq \gamma k^{2q}\n w_t\n_1^2+C_{k,R}\n v_t\n_1^2\e(w).
     		\end{split}
     	\end{equation}
     	By (\ref{3-1}), mean value theorem,  H\"{o}lder's inequality with $\frac{2}{n}+\frac{n-4}{2n}+\frac{1}{2}=1$ and the Sobolev embedding $V_2\hookrightarrow L^{\frac{2n}{n-4}}\hookrightarrow L^{\frac{np}{2}}$, we have
     	\begin{equation}\label{3-6-5}
     		\begin{split}
     			|I_2|
     			&\leq C((1+|u|^p+|v|^p)|w|,|w_t|)\\
     			&\leq C(1+\n |u|^p\n_{L^\frac{n}{2}}+\n |v|^p\n_{L^\frac{n}{2}})\n w\n_{L^\frac{2n}{n-4}}\n w_t\n\\
     			&\leq C(1+\n u\n^p_2+\n v\n^p_2)\n w\n_{2}\n w_t\n\\
     			&\leq C_R\n (w,w_t)\n_\h^{2}\\
     			&\leq C_R\e(w).
     		\end{split}
     	\end{equation}
     	Inserting (\ref{3-6-4}) and (\ref{3-6-5}) into (\ref{3-6-3}) and using (\ref{3-6-1}), we get
     	\begin{equation*}
     		\frac{d}{dt}\e(w)+\gamma k^{2q}\n w_t\n_1^2\leq C_{k,R}(1+\n v_t\n_1^2)\e(w).
     	\end{equation*}
     	Applying Gronwall's inequality and using (\ref{3-6-2}), we obtain
     	\[\e(w(t))\leq \e(w(0))e^{\int_{0}^{t}C_{k,R}(1+\n v_t\n_1^2)ds}\leq C_{k,R,T}\e(w(0)),\quad \forall t\in[0,T].\]
     	Then, we conclude that
     	\[\n(w(t),w_t(t))\n_\h^2\leq \e(w(t))\leq C\e(w(0))\leq C\n(w(0),w_t(0))\n_\h^2,\quad\forall t\in[0,T],\]
     	where $C=C(R,T,K)$, which implies that (\ref{unique}) holds.
     	
     	In particular, taking $\xi_u(0)=\xi_v(0)$ in (\ref{unique}), we get the uniqueness for initial data with nonzero $\h$-norm. 
     \end{proof}
     
     In view of Theorem \ref{well-posedness} and Theorem \ref{uniqueness}, we now define the solution semigroup $S(t):\h\rightarrow\h$ associated with problem (\ref{eq}):
     \[S(t)(u_0,u_1):=(u(t),u_t(t)),\]
     where $u(t)$ is the unique weak solution of problem (\ref{eq}) corresponding to the initial data $(u_0,u_1)\in\h$. By Theorem \ref{well-posedness} (ii), the mapping $t\mapsto S(t)\xi$ is continuous from $\mathbb{R}^+$ into $\h$ for any fixed $\xi\in\h$.
     
     \begin{prop}\label{continuity}
     	$\{S(t)\}_{t\geq0}$ is a continuous semigroup, namely, 
     	\[(\xi_n,\tau_n)\rightarrow (\xi,\tau)\ \ \mathrm{strongly}\ \mathrm{in}\ \h\times[0,\infty)\Rightarrow S(\tau_n)\xi_n\rightarrow S(\tau)\xi\ \ \mathrm{strongly}\ \mathrm{in}\ \h.\]
     \end{prop}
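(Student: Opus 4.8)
The plan is to reduce the joint continuity to three ingredients already at our disposal: the data--Lipschitz estimate (\ref{unique}) of Theorem \ref{uniqueness}, the crude energy comparison (\ref{3-2-51}) which is valid near the origin, and the continuity of $t\mapsto S(t)\xi$ on $\mathbb{R}^+$ recorded right after Theorem \ref{uniqueness}. So let $(\xi_n,\tau_n)\to(\xi,\tau)$ in $\h\times[0,\infty)$. Since $\{\tau_n\}$ is bounded I fix $T$ with $\tau\le T$ and $\tau_n\le T$ for all $n$, and use the decomposition
\[
\n S(\tau_n)\xi_n-S(\tau)\xi\n_\h\le\n S(\tau_n)\xi_n-S(\tau_n)\xi\n_\h+\n S(\tau_n)\xi-S(\tau)\xi\n_\h .
\]
The second term tends to $0$ by the time--continuity of the orbit through $\xi$. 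Hence the whole statement reduces to proving $\sup_{t\in[0,T]}\n S(t)\xi_n-S(t)\xi\n_\h\to0$.

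For this I would split according to whether $\xi=0$. If $\xi\neq0$, put $m=\n\xi\n_\h>0$ and $R=\tfrac32 m$, so that $R/2<m<R$; since $\n\xi_n\n_\h\to m$ there is $N$ with $\n\xi_n\n_\h\in[R/2,R]$ for all $n\ge N$, and the set $K:=\{\xi\}\cup\{\xi_n:n\ge N\}$ --- a convergent sequence together with its limit --- is compact and contained in $\{\eta\in\h:\ R/2\le\n\eta\n_\h\le R\}$. Theorem \ref{uniqueness} then furnishes a constant $C=C(R,T,K)$ with $\n S(t)\xi_n-S(t)\xi\n_\h\le C\n\xi_n-\xi\n_\h$ for all $t\in[0,T]$ and all $n\ge N$, and the right-hand side goes to $0$; dropping the finitely many indices $n<N$ is harmless for the limit.

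If instead $\xi=0$, then $S(t)0\equiv0$ by the uniqueness part of Theorem \ref{uniqueness}, so it suffices to show $\sup_{t\in[0,T]}\n S(t)\xi_n\n_\h\to0$. Setting $r=\sup_n\n\xi_n\n_\h<\infty$ and applying (\ref{3-2-51}) with $\tau=0$, one gets $\n S(t)\xi_n\n_\h^2\le C_{r,T}\n\xi_n\n_\h^2$ for every $t\in[0,T]$, and the right-hand side tends to $0$. Together with the previous paragraph this proves the claim; the algebraic semigroup identity $S(t+s)=S(t)S(s)$ is, as usual, an immediate consequence of uniqueness of weak solutions and of the time--translation invariance of (\ref{eq}).

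The point worth flagging is that there is no single hard estimate here, but rather a structural obstruction: the lower bound $k=k(R,T,K)$ coming from Lemma \ref{lemma1}, hence the Lipschitz constant in (\ref{unique}), degenerates as $\n\xi\n_\h\to0$, so one cannot apply Theorem \ref{uniqueness} uniformly on a full neighbourhood of an arbitrary point of $\h$. The main thing to get right is therefore the case split --- isolating the origin and handling it with the cruder inequality (\ref{3-2-51}), which needs only an upper bound on the initial norm.
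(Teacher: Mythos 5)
Your proof is correct and follows essentially the same route as the paper: the same triangle-inequality decomposition, the same case split between $\xi=0$ (handled by the crude bound (\ref{3-2-51})) and $\xi\neq 0$ (handled by placing $\{\xi\}\cup\{\xi_n\}_{n\ge N}$ in a compact subset of an annulus and invoking the Lipschitz estimate (\ref{unique})). The structural point you flag --- that the constant in (\ref{unique}) degenerates near the origin, forcing the case split --- is exactly the reason the paper's proof is organized this way.
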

     \begin{proof}
     	Let $\tau\geq0$, $\xi\in\h$ with $\n\xi\n_\h=R$, and $(u(t),u_t(t))=S(t)\xi$,  $(u^n(t),u^n_t(t))=S(t)\xi_n$, respectively.
     	
     	\emph{Case 1.} $R=0.$ Then $S(t)\xi=0,\ \forall t\geq0$. Obviously, estimate (\ref{3-2-51}) holds true for $u^n$, then
     	\[\n S(\tau_n)\xi_n-S(\tau)\xi\n_\h=\n(u^n(\tau_n),u^n_t(\tau_n))\n_\h \leq C_\tau\n\xi_n\n_\h\rightarrow C_\tau\n\xi\n_\h=0,\]
     	
     	\emph{Case 2.} $R>0.$ Without loss of generality, $0\leq\tau_n\leq \tau+1$, $\frac{3R}{4}\leq \n\xi_n\n_\h\leq \frac{3R}{2},\ \forall n\geq1$. Since $\xi_n\rightarrow\xi$ strongly in $\h$, $K:=\{\xi_n\}_{n=1}^\infty\cup\{\xi\}$ is compact.
        Then, using (\ref{unique}), we obtain
     	\begin{align*}
     		&\quad\ \n S(\tau_n)\xi_n-S(\tau)\xi\n_\h\\
     		&\leq \n S(\tau_n)\xi_n-S(\tau_n)\xi\n_\h+\n S(\tau_n)\xi-S(\tau)\xi\n_\h\\
     		&\leq C\n\xi_n-\xi\n_\h+\n S(\tau_n)\xi-S(\tau)\xi\n_\h\\
     		&\rightarrow0\ \ \mathrm{as}\ \ n\rightarrow\infty,
     	\end{align*}
     	where $C=C(\tau,R,K)$ and we have used the continuity of the mapping $t\mapsto S(t)\xi$. The proof is complete.
     \end{proof}

\section{Global attractor in $\h$}
     
     In this section, we will verify the existence of the global attractor of the dynamical system $(\h,S(t))$. We recall the difinition of a global attractor here for convenience.
     
     \begin{Def}\cite{Vishik,Hale,Temam}
     	Let $\{S(t)\}_{t\geq0}$ be a semigroup acting on a metric space $(X,d)$. A subset $\mathcal{A}\subset X$ is called a global attractor of $(X,S(t))$ if\\
     	(i) $\mathcal{A}$ is compact in $X$;\\
     	(ii) $\mathcal{A}$ is invariant, i.e. $S(t)\mathcal{A}=\mathcal{A},\ \forall t\geq0$;\\
     	(iii) $\mathcal{A}$ attracts all bounded sets in $X$, i.e. for any bounded set $B\subset X$, 
     	\[dist_X(S(t)B,\mathcal{A}):=\sup_{x\in S(t)B} \inf_{y\in \mathcal{A}}d(x,y)\rightarrow 0,\ as\ t\rightarrow\infty.\]
     \end{Def}
     
     We first verify the dissipativity of $(\h,S(t))$.
     
     \begin{prop}\label{dissipativity}
     	Let Assumption \ref{assumption} be valid. Then the dynamical system $(\h,S(t))$ is dissipative, i.e. there exists a bounded set $\mathcal{B}_0\subset\h$ satisfying: for any bounded set $B\subset\h$, $\exists\ t_B>0$ such that $S(t)B\subset\mathcal{B}_0,\ \forall t\geq t_B.$ In particular, $\mathcal{B}_0$ is called a bounded absorbing set of $(\h,S(t))$.
     \end{prop}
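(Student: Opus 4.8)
The plan is to run the classical perturbed‑energy argument for dissipativity, the only real difficulty being the degeneracy of the damping coefficient. Two facts come for free. The energy equality (\ref{energy}) shows that $t\mapsto E(u(t))$ is non‑increasing along every trajectory; together with the lower bound $E(u)\ge\frac{\lambda_1-\mu}{2\lambda_1}\n(u,u_t)\n_\h^2-C$ from (\ref{3-4}) and (\ref{2-4}) and the upper bound $E(u)\le C_0\big(1+\n(u,u_t)\n_\h^{p+2}\big)$ from (\ref{3-3}) and $V_2\hookrightarrow L^{p+2}$, this implies that for any bounded $B\subset\h$ the orbit $\{S(t)\xi:\xi\in B,\ t\ge0\}$ stays in a ball of some radius $R_B$ depending only on $B$; in particular $\n(u,u_t)\n_\h^{2q}\le R_B^{2q}$ along such orbits. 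All computations below are carried out at the Galerkin level (cf. the proof of Theorem \ref{well-posedness}) and then transferred to the weak solution by weak lower semicontinuity of the $\h$‑norm.

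Introduce $\mathcal{L}(t):=E(u(t))+\varepsilon\,(u(t),u_t(t))$ with $\varepsilon>0$ small. For $\varepsilon$ small one has $\mathcal{L}\ge c_0\n(u,u_t)\n_\h^2-C$ and $\mathcal{L}(0)\le M_B$ with $M_B$ depending only on $B$. Differentiating, using (\ref{energy}) and testing the equation by $u$,
\begin{equation*}
	\frac{d}{dt}\mathcal{L}+\gamma\n(u,u_t)\n_\h^{2q}\n u_t\n_1^2=\varepsilon\Big(\n u_t\n^2-\kappa\n u\n_1^2-\n u\n_2^2-\gamma\n(u,u_t)\n_\h^{2q}(A^{1/2}u_t,u)-(f(u),u)\Big).
\end{equation*}
I would estimate $-(f(u),u)$ and $-\int_\Omega F(u)dx$ by (\ref{3-5}) and (\ref{3-4}), bound the damping cross term by $|(A^{1/2}u_t,u)|\le\n u_t\n_1\n u\n_1$ and Young's inequality so that the harmful piece $\tfrac12\varepsilon^2\gamma\n(u,u_t)\n_\h^{2q}\n u\n_1^2\le\tfrac12\varepsilon^2\gamma\lambda_1^{-1/2}R_B^{2q}\n u\n_2^2$ is absorbed into $-\varepsilon\n u\n_2^2$ once $\varepsilon\le\sqrt{\lambda_1}(1-\mu/\lambda_1)/(\gamma R_B^{2q})$, and use that on the region $\n(u,u_t)\n_\h^{2q}\ge 4\varepsilon\gamma^{-1}\lambda_1^{-1/2}$ the damping term dominates $\varepsilon\n u_t\n^2$ (because $\gamma\n(u,u_t)\n_\h^{2q}\n u_t\n_1^2\ge\gamma\lambda_1^{1/2}\n(u,u_t)\n_\h^{2q}\n u_t\n^2$). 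The outcome is a dissipation inequality of the form
\begin{equation*}
	\frac{d}{dt}\mathcal{L}(t)+\varepsilon\alpha\,\n(u(t),u_t(t))\n_\h^2\le\varepsilon C\qquad\text{whenever}\quad\n(u(t),u_t(t))\n_\h^{2q}\ge 4\varepsilon\gamma^{-1}\lambda_1^{-1/2},
\end{equation*}
with constants $\alpha>0$ and $C>0$ \emph{independent of $B$}.

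It remains to run the usual comparison. Put $\rho_0:=\sqrt{2C/\alpha}$ (universal) and, shrinking $\varepsilon$ further if necessary, assume $\rho_0^{2q}\ge 4\varepsilon\gamma^{-1}\lambda_1^{-1/2}$, so that the displayed inequality holds whenever $\n(u,u_t)\n_\h\ge\rho_0$, and there it gives $\frac{d}{dt}\mathcal{L}(t)\le-\varepsilon C<0$. If the orbit kept $\n(u(t),u_t(t))\n_\h\ge\rho_0$ on all of $[0,t_B]$ with $t_B:=(M_B+C')/(\varepsilon C)+1$ ($-C'$ being the universal lower bound of $\mathcal{L}$), then $\mathcal{L}(t_B)\le M_B-\varepsilon C t_B<-C'$, a contradiction; hence there is $t_*\le t_B$ with $\n(u(t_*),u_t(t_*))\n_\h<\rho_0$, and therefore $E(u(t_*))\le C_0(1+\rho_0^{p+2})=:E_*$. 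Monotonicity of $E$ from (\ref{energy}) gives $E(u(t))\le E_*$ for all $t\ge t_*$, and coercivity of $E$ then yields $\n(u(t),u_t(t))\n_\h^2\le\frac{2\lambda_1}{\lambda_1-\mu}(E_*+C)=:\rho_1^2$ for all $t\ge t_B$, with $\rho_1$ independent of $B$. Hence $\mathcal{B}_0:=\{\xi\in\h:\n\xi\n_\h\le\rho_1\}$ is a bounded absorbing set. The main obstacle is exactly the degeneracy $\n(u,u_t)\n_\h^{2q}\to0$ as $(u,u_t)\to0$, which precludes a globally valid dissipation inequality; it is circumvented by restricting the estimate to the region $\n(u,u_t)\n_\h\ge\rho_0$ where the damping is effectively non‑degenerate and then invoking the monotonicity of the genuine energy $E$ to trap the orbit after it first drops below $\rho_0$.
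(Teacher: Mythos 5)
Your argument is correct, and while it is built on the same perturbed-energy functional $E(u)+\varepsilon(u,u_t)$ and the same multiplier $u_t+\varepsilon u$ as the paper, it resolves the central difficulty --- the degeneracy of the damping coefficient near the origin --- by a genuinely different mechanism. The paper's proof stays global in phase space: its key step is the elementary Young-type estimate
\begin{equation*}
	\n u_t\n^2\ \le\ \tfrac{1}{C_\Omega}\n u_t\n^{2q+2}+C_2\ \le\ \n(u,u_t)\n_\h^{2q}\n u_t\n_1^2+C_2,
\end{equation*}
which absorbs the harmful term $\varepsilon\n u_t\n^2$ into the degenerate damping plus a \emph{universal} constant, yielding an unconditional inequality $\frac{d}{dt}V+\alpha V\le\alpha C_3$ and a single application of Gronwall's lemma. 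You instead obtain a dissipation inequality only on the exterior region $\n(u,u_t)\n_\h\ge\rho_0$ (where the damping coefficient is bounded below), and compensate with two extra ingredients: a first-hitting-time contradiction argument forcing the orbit into the ball of radius $\rho_0$ by a time $t_B$, and the monotonicity of the genuine energy $E$ from the energy equality (\ref{energy}) to trap the orbit thereafter. Both routes produce a $B$-independent absorbing radius; the paper's is shorter and avoids any case distinction, while yours makes transparent \emph{why} dissipativity survives the degeneracy (the damping only needs to act where the solution is large, and the gradient-like structure does the rest once the solution is small). Your quantitative bookkeeping is sound --- $\alpha$ and $C$, hence $\rho_0$ and $\rho_1$, are independent of $B$, with only $\varepsilon$ and $t_B$ depending on $B$ through $R_B$ and $M_B$ --- and the justification of the differential identities for weak solutions is at the same level of rigor as the paper's own proof, which likewise relies on the approximation argument of Theorem \ref{well-posedness} (iii).
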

     \begin{proof}
     	Let $B\subset\h$ be an arbitrary bounded set. Due to (\ref{3-7}), there exists some constant $C_B>0$ such that
     	\begin{equation}\label{4-1-1}
     		\n (u(t),u_t(t))\n_\h\leq C_B,\quad\forall t\geq0
     	\end{equation}
     	holds for every weak solution $u$ with its initial data $(u(0),u_t(0))\in B$. Multiplying (\ref{eq}) by $u_t+\alpha u$ with $\alpha>0$ to be determined later, after integrating over $x\in\Omega$, we get
     	\begin{equation}\label{4-1-2}
     		\frac{d}{dt}V(u)+\alpha V(u)+\Gamma=0,
     	\end{equation}
     	where
     	\begin{align*}
     		V(u)=\frac{1}{2}\n u_t\n^2+\frac{\kappa}{2}\n u\n_1^2+\frac{1}{2}\n u\n_2^2+\int_{\Omega}F(u)dx+\alpha(u_t,u),
     	\end{align*}
        and
     	\begin{align*}
     		\Gamma=&\gamma\n(u,u_t)\n_\h^{2q}\n u_t\n_1^2+\frac{\alpha\kappa}{2}\n u\n_1^2+\frac{\alpha}{2}\n u\n_2^2\\
     		&-\frac{3\alpha}{2}\n u_t\n^2-\alpha^2(u_t,u)+\alpha\gamma\n(u,u_t)\n_\h^{2q}(A^\frac{1}{4}u_t,A^\frac{1}{4}u)\\
     		&+\alpha(f(u),u)-\alpha\int_{\Omega}F(u)dx.
     	\end{align*}
     	By (\ref{3-3}), (\ref{3-4}) and (\ref{4-1-1}), there exists $\alpha_0>0$ such that for $\forall \alpha\in(0,\alpha_0]$,
     	\begin{equation}\label{4-1-3}
     		c_1\n(u,u_t)\n_\h^2-C_1\leq V(u)\leq C_B,\quad \forall t\geq0,
     	\end{equation}
     	where $0<c_1<1,\ C_1>0$ are constants. 
     	
     	By Young's inequality with $\varepsilon$ and the Sobolev embedding $\n w\n^2\leq C_\Omega\n w\n^2_1$, we infer that there exists some constant $C_2=C_2(\Omega,q)>0$ such that,
     	\begin{equation}\label{4-1-4}
     		\begin{split}
     			\n u_t\n^2&\leq \frac{1}{C_\Omega}\n u_t\n^{2q+2}+C_2\\
     			&\leq C_\Omega\frac{1}{C_\Omega}\n u_t\n^{2q}\n u_t\n^2_1+C_2\\
     			&\leq \n(u,u_t)\n_\h^{2q}\n u_t\n^2_1+C_2.
     		\end{split}
     	\end{equation}
     	Using Cauchy inequality and Young's inequality, we infer from (\ref{2-4}) and (\ref{4-1-1}) that
     	\begin{equation}\label{4-1-5}
     		\big|\alpha^2(u_t,u)\big|\leq\alpha^2 \n u_t\n\n u\n\leq \frac{\alpha^2}{\sqrt{\lambda_1}}\n u_t\n\n u\n_2\leq \frac{\alpha^3}{2\lambda_1}\n u\n_2^2+\frac{\alpha}{2}\n u_t\n^2,
     	\end{equation}
     	and
     	\begin{equation}\label{4-1-6}
     		\begin{split}
     			\Big|\alpha\gamma\n(u,u_t)\n_\h^{2q}(A^\frac{1}{4}u_t,A^\frac{1}{4}u)\Big|&\leq \alpha\gamma\n(u,u_t)\n_\h^{2q}\n u_t\n_1\n u\n_1\\
     			&\leq \frac{\gamma}{2}\n(u,u_t)\n_\h^{2q}\n u_t\n_1^2+\frac{\alpha^2\gamma}{2}\n(u,u_t)\n_\h^{2q}\n u\n_1^2\\
     			&\leq \frac{\gamma}{2}\n(u,u_t)\n_\h^{2q}\n u_t\n_1^2+\alpha^2C_B\n u\n_2^2.
     		\end{split}
     	\end{equation}
     	From (\ref{2-4}) and the dissipativity condition (\ref{3-5}), we get
     	\begin{equation}\label{4-1-7}
     		\begin{split}
     			\alpha(f(u),u)-\alpha\int_{\Omega}F(u)dx&\geq -\frac{\alpha\mu}{2}\n u\n^2-\alpha C\geq -\frac{\alpha\mu}{2\lambda_1}\n u\n_2^2-\alpha C.
     		\end{split}
     	\end{equation}
     	Thus, it follows from estimates (\ref{4-1-4})-(\ref{4-1-7}) that
     	\begin{equation}\label{4-1-8}
     		\begin{split}
     			\Gamma&\geq \frac{\gamma}{2}\n(u,u_t)\n_\h^{2q}\n u_t\n_1^2+\frac{\alpha}{2}\n u\n_2^2-\frac{\alpha\mu}{2\lambda_1}\n u\n_2^2-\alpha^2C_B\n u\n_2^2\\
     			&\quad-\frac{\alpha^3}{2\lambda_1}\n u\n^2-2\alpha\n(u,u_t)\n_\h^{2q}\n u_t\n^2_1-2\alpha C_2-\alpha C\\
     			&\geq (\frac{\gamma}{2}-2\alpha)\n(u,u_t)\n_\h^{2q}\n u_t\n_1^2+\left(\frac{\lambda_1-\mu}{2\lambda_1}\alpha-\alpha^2C_B-\frac{\alpha^3}{2\lambda_1}\right)\n u\n_2^2\\
     			&\quad-\alpha C_3,
     		\end{split}
     	\end{equation}
         where $C_3=2C_2+C$. 
     	Choose $\alpha\in(0,\alpha_0]$ small enough ($\alpha$ may depend on $B$) such that
     	\[\frac{\gamma}{2}-2\alpha>0,\ \ \mathrm{and}\ \  \frac{\lambda_1-\mu}{2\lambda_1}\alpha-\alpha^2C_B-\frac{\alpha^3}{2\lambda_1}>0.\]
     	Thus, inserting (\ref{4-1-8}) into (\ref{4-1-2}), we obtain
     	\[\frac{d}{dt}V(u)+\alpha V(u)\leq \alpha C_3.\]
     	Applying now Gronwall's lemma and using (\ref{4-1-3}), we end up with
     	\begin{equation}
     		V(u(t))\leq V(u(0))e^{-\alpha t}+C_3(1-e^{-\alpha t})\leq C_Be^{-\alpha t}+C_3.
     	\end{equation}
     	Then there exists $t_B=\max\{0,\frac{1}{\alpha}\ln\frac{C_B}{C_3}\}$ such that
     	\[\n(u(t),u_t(t))\n_\h^2\leq \frac{V(u(t))+C_1}{c_1}\leq \frac{2C_3+C_1}{c_1}=:R_0^2,\quad\forall t\geq t_B.\]
     	Therefore, the dynamical system $(\h,S(t))$ is dissipative and $\mathcal{B}_0=\{\xi\in\h\ |\ \n\xi\n_\h\leq R_0\}$ is a bounded absorbing set.
     \end{proof}
     
     Now, we are in a position to prove our main result of this section.
     
     \begin{thm}\label{attractor}
     	Let Assumption \ref{assumption} be valid. Then the dynamical system $(\h,S(t))$ possesses a global attractor $\mathcal{A}$ in $\h$. Moreover, the global attractor $\mathcal{A}$ is bounded in $\h_s$:
     	\begin{equation}
     		\mathcal{A}\subset\h_s,\quad \n\mathcal{A}\n_{\h_s}:=\sup_{\xi\in\mathcal{A}}\n\xi\n_{\h_s}\leq C,
     	\end{equation}
        where $s=\frac{1}{q}$.
     \end{thm}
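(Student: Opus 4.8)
The plan is to produce a compact absorbing set for $(\h,S(t))$ and then invoke the classical existence theorem for global attractors, using the regularizing estimate of Theorem \ref{well-posedness}(ii) to place the attractor inside $\h_s$. First I would recall the two ingredients already available: by Proposition \ref{dissipativity} the ball $\mathcal{B}_0=\{\xi\in\h\ |\ \n\xi\n_\h\leq R_0\}$ is a bounded absorbing set, and by Proposition \ref{continuity} the semigroup $\{S(t)\}_{t\geq0}$ is (jointly) continuous on $\h$.

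Next I would apply the smoothing estimate (\ref{3-8}) with $t=t'=1$ and $T=1$: for every $\xi\in\h$ one has $\n S(1)\xi\n_{\h_s}^2\leq Q_2\big(\n\xi\n_\h,1\big)$. Since $Q_2$ is monotone increasing, setting $R_1^2:=Q_2(R_0,1)$ and $\mathcal{B}_1:=\{\eta\in\h_s\ |\ \n\eta\n_{\h_s}\leq R_1\}$ gives $S(1)\mathcal{B}_0\subset\mathcal{B}_1$. The key point is that $\mathcal{B}_1$ is compact in $\h$: by the compact embedding (\ref{2-2}) one has $\h_s=V_{2+s}\times V_s\hookrightarrow\hookrightarrow V_2\times L^2=\h$, so $\mathcal{B}_1$, being bounded in $\h_s$, is precompact in $\h$; and it is closed in $\h$, because any $\h$-limit of a sequence in $\mathcal{B}_1$ is (along a subsequence) also its weak $\h_s$-limit and $\n\cdot\n_{\h_s}$ is weakly lower semicontinuous. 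Now for an arbitrary bounded $B\subset\h$, dissipativity gives $t_B$ with $S(t)B\subset\mathcal{B}_0$ for all $t\geq t_B$, whence $S(t)B=S(1)S(t-1)B\subset S(1)\mathcal{B}_0\subset\mathcal{B}_1$ for all $t\geq t_B+1$. Thus $\mathcal{B}_1$ is a compact absorbing set, and the classical theorem (see \cite{Temam,Hale,Vishik}) yields the global attractor
\[\mathcal{A}=\omega(\mathcal{B}_0)=\bigcap_{\tau\geq0}\overline{\bigcup_{t\geq\tau}S(t)\mathcal{B}_0}\subset\mathcal{B}_1,\]
the closure being taken in $\h$. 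In particular $\mathcal{A}\subset\h_s$ with $\n\mathcal{A}\n_{\h_s}\leq R_1=:C$, which is the asserted bound.

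The argument is essentially routine once Theorem \ref{well-posedness}(ii) is in hand; the only mild subtleties are checking that $\mathcal{B}_1$ is closed (hence compact) in $\h$ and verifying that the continuity supplied by Proposition \ref{continuity} is precisely what the abstract existence theorem needs — both straightforward. The genuine difficulty of the problem, namely establishing the regularizing estimate (\ref{3-8}) for weak solutions of the degenerate equation, has already been overcome in Section 3, so here there is no serious obstacle.
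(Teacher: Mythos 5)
Your proposal is correct and follows essentially the same route as the paper: use the bounded absorbing set $\mathcal{B}_0$ from Proposition \ref{dissipativity}, the smoothing estimate (\ref{3-8}) at time $1$ to show $S(1)\mathcal{B}_0$ lands in a ball $\mathcal{B}_1$ of $\h_s$ that is compact in $\h$, conclude that $\mathcal{B}_1$ is a compact absorbing set, and invoke the classical existence theorem together with Proposition \ref{continuity}. Your extra remark verifying that $\mathcal{B}_1$ is closed in $\h$ via weak lower semicontinuity of $\n\cdot\n_{\h_s}$ is a detail the paper leaves implicit, but otherwise the arguments coincide.
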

     \begin{proof}
     	According to the abstract attractor existence theorem, we only need to verify that $S(t)$ is continuous on $\h$ for every fixed $t\geq0$ and that $(\h,S(t))$ possesses a compact absorbing set in $\h$, see \cite{Hale1,Robinson,Temam}. 
     	
     	The continuity of $S(t)$ comes from Propsition \ref{continuity}. Due to estimate (\ref{3-8}), for any $\xi\in\mathcal{B}_0$, we have
     	\begin{equation}\label{4-3-1}
     		\n S(1)\xi\n_{\h_s}\leq C(\n \xi\n_\h)\leq C_{R_0},
     	\end{equation}
     	where $\mathcal{B}_0=\{\xi\in\h\ |\ \n\xi\n_\h\leq R_0\}$ is the absorbing set constructed in the Proposition \ref{dissipativity} and $s=\frac{1}{q}$. Then, the set
     	\[\mathcal{B}_1:=\{\xi\in\h_s\ |\ \n\xi\n_{\h_s}\leq C_{R_0}\}\]
     	is a compact absorbing set for $(\h,S(t))$ in $\h$. Indeed, $\mathcal{B}_1$	is compact in $\h$ due to the compact embedding $\h_s\hookrightarrow\hookrightarrow\h$. Let $B\subset\h$ be an arbitrary bounded set. Since $\mathcal{B}_0$ is absorbing, there exists $t_B>0$ such that $S(t)B\subset\mathcal{B}_0,\ \forall t\geq t_B$. Then, (\ref{4-3-1}) implies
     	\[S(t)B=S(1)S(t-1)B\subset S(1)\mathcal{B}_0\subset \mathcal{B}_1,\ \forall t\geq t_B+1.\]
     	
     	Thus, the existence of the global attractor $\mathcal{A}$ is proved. Finally, noticing that $\mathcal{A}\subset\mathcal{B}_1$, we have $\mathcal{A}$ is bounded in $\h_s$ and $\n\mathcal{A}\n_{\h_s}\leq C_{R_0}$.
     \end{proof}

\section{Global attractor in $\h_{\frac{1}{q}}$}
     
     We still use the notation $s=\frac{1}{q}\in(0,1]$ throughout this section and will prove that the global attractor $\mathcal{A}$ constructed in Theorem \ref{attractor} is exactly an $(\h,\h_s)$-global attractor.
     
     \begin{Def}\cite{Zhong}
     	Let $X,Y$ be two Banach spaces and $\{S(t)\}_{t\geq0}$ be a semigroup acting on $X$. $\{S(t)\}_{t\geq0}$ is called an $(X,Y)$-semigroup if $S(t)X\subset Y,\ t>0$, and $\{S(t)\}_{t\geq0}$ is called norm-to-weak continuous if additionally,
     	\[(\xi_n,t_n)\rightarrow (\xi,t)\ \ \mathrm{strongly}\ \mathrm{in}\ X\times(0,\infty)\Rightarrow S(t_n)\xi_n\rightarrow S(t)\xi\ \ \mathrm{weakly}\ \mathrm{in}\ Y.\]
     \end{Def}
     
     We have the following result to verify the norm-to-weak continuity of a $(X,Y)$-semigroup.
     
     \begin{lemma}\cite{Zhong}\label{lemma5.2}
     	Let $X,Y$ be two Banach spaces and $X^*,Y^*$ be their dual spaces,  respectively, $\{S(t)\}_{t\geq0}$ be a semigroup on $X$ and an $(X,Y)$-semigroup. Assume that\\
        (i) both $i:Y\rightarrow X$ and $i^*:X^*\rightarrow Y^*$ are densely injective;\\
        (ii) $\{S(t)\}_{t\geq0}$ is continuous or weak continuous on $X$, i.e.
        \[(\xi_n,t_n)\rightarrow (\xi,t)\ \ \mathrm{strongly}\ \mathrm{in}\ X\times(0,\infty)\Rightarrow S(t_n)\xi_n\rightarrow S(t)\xi\ \ \mathrm{strongly}\ \mathrm{in}\ X.\]
        or
        \[(\xi_n,t_n)\rightarrow (\xi,t)\ \ \mathrm{weakly}\ \mathrm{in}\ X\times(0,\infty)\Rightarrow S(t_n)\xi_n\rightarrow S(t)\xi\ \ \mathrm{weakly}\ \mathrm{in}\ X.\]
        
        Then, $\{S(t)\}_{t\geq0}$ is a norm-to-weak continuous $(X,Y)$-semigroup if and only if $\{S(t)\}_{t\geq0}$ maps compact subsets of $Y\times(0,\infty)$ into bounded sets of $Y$.
     \end{lemma}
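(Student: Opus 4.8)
The plan is to prove the two implications of the equivalence separately. The ``necessity'' part is a short compactness‑plus‑uniform‑boundedness argument; the ``sufficiency'' part rests on the density of $i^{*}:X^{*}\to Y^{*}$, which is exactly the device that replaces any reflexivity hypothesis on $Y$.

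\emph{Necessity.} Assume $\{S(t)\}_{t\ge0}$ is norm‑to‑weak continuous and let $K\subset Y\times(0,\infty)$ be compact. I would argue by contradiction: if $\{S(t)\xi:(\xi,t)\in K\}$ were unbounded in $Y$, pick $(\xi_{n},t_{n})\in K$ with $\n S(t_{n})\xi_{n}\n_{Y}\to\infty$ and, by compactness of $K$, pass to a subsequence (not relabeled) with $(\xi_{n},t_{n})\to(\xi,t)\in K$ in $Y\times(0,\infty)$. Since the embedding $i:Y\to X$ is continuous, $\xi_{n}\to\xi$ in $X$ as well, hence $(\xi_{n},t_{n})\to(\xi,t)$ strongly in $X\times(0,\infty)$ with $t>0$; norm‑to‑weak continuity then gives $S(t_{n})\xi_{n}\rightharpoonup S(t)\xi$ weakly in $Y$. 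But a weakly convergent sequence in a Banach space is norm‑bounded (uniform boundedness principle), which contradicts $\n S(t_{n})\xi_{n}\n_{Y}\to\infty$.

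\emph{Sufficiency.} Assume $\{S(t)\}_{t\ge0}$ maps compact subsets of $Y\times(0,\infty)$ into bounded subsets of $Y$, and take $(\xi_{n},t_{n})\to(\xi,t)$ strongly in $X\times(0,\infty)$ with $t>0$. First, hypothesis (ii), in either of its two forms, yields $S(t_{n})\xi_{n}\rightharpoonup S(t)\xi$ weakly in $X$, so $\langle\psi,\,iS(t_{n})\xi_{n}\rangle\to\langle\psi,\,iS(t)\xi\rangle$ for every $\psi\in X^{*}$. Second, I would establish a uniform bound $M:=\sup_{n}\n S(t_{n})\xi_{n}\n_{Y}<\infty$: fixing $\delta\in(0,t)$ and writing $S(t_{n})\xi_{n}=S(t_{n}-\delta)\bigl(S(\delta)\xi_{n}\bigr)$ for all large $n$, the exponents $t_{n}-\delta$ remain in a compact subset of $(0,\infty)$, while the advanced data $S(\delta)\xi_{n}$ — which, thanks to the $(X,Y)$‑smoothing, lie in a relatively compact subset of $Y$ — together with $S(\delta)\xi$ form a compact set in $Y$, so the standing hypothesis bounds their $S$‑images in $Y$. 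Third, and crucially, fix $\phi\in Y^{*}$ and $\varepsilon>0$; by hypothesis (i) the range $i^{*}(X^{*})$ is dense in $Y^{*}$, so choose $\psi\in X^{*}$ with $\n\phi-i^{*}\psi\n_{Y^{*}}<\varepsilon$. Using $\langle i^{*}\psi,y\rangle=\langle\psi,iy\rangle$, the triangle inequality, and $\n\phi-i^{*}\psi\n_{Y^{*}}<\varepsilon$, one gets
\[
\bigl|\langle\phi,\,S(t_{n})\xi_{n}-S(t)\xi\rangle\bigr|\ \le\ \bigl(M+\n S(t)\xi\n_{Y}\bigr)\,\varepsilon\ +\ \bigl|\langle\psi,\,iS(t_{n})\xi_{n}-iS(t)\xi\rangle\bigr|,
\]
and letting $n\to\infty$ and then $\varepsilon\to0$ gives $S(t_{n})\xi_{n}\rightharpoonup S(t)\xi$ in $Y$, i.e. $\{S(t)\}_{t\ge0}$ is a norm‑to‑weak continuous $(X,Y)$‑semigroup.

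I expect the one genuinely non‑formal step to be the uniform $Y$‑bound on $\{S(t_{n})\xi_{n}\}$ in the sufficiency part: the obvious ``orbit tail'' $\{(\xi_{n},t_{n}):n\ge1\}\cup\{(\xi,t)\}$ is only compact in $X\times(0,\infty)$, not in $Y\times(0,\infty)$, so one must first use the semigroup identity to advance the initial data a short time into $Y$ before the compactness‑to‑boundedness hypothesis becomes applicable; reassuringly, in the concrete setting of this paper that bound is immediate from the smoothing estimate (\ref{3-8}). Once the uniform bound is in hand, the density of $i^{*}$ converts weak convergence in $X$ into weak convergence in $Y$ by the $\varepsilon$‑splitting above, so that no reflexivity of $Y$ enters anywhere in the argument.
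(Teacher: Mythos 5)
First, note that the paper gives no proof of this lemma at all --- it is quoted from the reference \cite{Zhong} --- so there is no in-paper argument to measure yours against; I can only judge the proposal on its own terms. Your necessity direction is correct: compactness of $K$ forces the limit time $t$ to be positive, continuity of $i:Y\rightarrow X$ transfers the convergence of the data to $X$, and norm-to-weak continuity plus the fact that weakly convergent sequences are norm-bounded yields the contradiction. Likewise, your $\varepsilon$-splitting through the dense range of $i^{*}$ is exactly the standard mechanism that upgrades weak convergence in $X$ to weak convergence in $Y$ \emph{once} a uniform bound $M=\sup_{n}\Vert S(t_{n})\xi_{n}\Vert_{Y}<\infty$ is in hand.

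The gap is precisely at the step you flagged, and your proposed repair does not close it. You assert that the advanced data $S(\delta)\xi_{n}$ ``lie in a relatively compact subset of $Y$'' thanks to the $(X,Y)$-smoothing. But the $(X,Y)$-semigroup property only says $S(\delta)X\subset Y$; it carries no continuity, boundedness, or compactness of $S(\delta)$ as a map from $X$ into $Y$. Hypothesis (ii) gives $S(\delta)\xi_{n}\rightarrow S(\delta)\xi$ (strongly or weakly) only in $X$, and convergence in $X$ of a sequence that happens to lie in $Y$ implies neither boundedness nor relative compactness in $Y$ (think of normalized eigenfunctions $e_{n}$: $e_{n}\rightharpoonup 0$ in $L^{2}$ while $\Vert e_{n}\Vert_{H^{1}}\rightarrow\infty$). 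Hence the set $\{(S(\delta)\xi_{n},\,t_{n}-\delta)\}_{n}\cup\{(S(\delta)\xi,\,t-\delta)\}$ is not known to be compact in $Y\times(0,\infty)$, and the standing hypothesis cannot be applied to it; as stated, that hypothesis only controls images of data converging in the $Y$-norm, whereas your $\xi_{n}$ converge only in $X$. I do not see how to bridge this within the abstract assumptions alone. In the concrete setting of this paper the needed bound does come for free, but from the smoothing estimate (\ref{3-8}), which gives the strictly stronger property that bounded sets of $\mathcal{H}$ times compact subsets of $(0,\infty)$ are mapped to bounded sets of $\mathcal{H}_{s}$; that is the property one should (and the authors implicitly do) verify, rather than the literal compact-in-$Y$ condition.
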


     The definition of the $(X,Y)$-global attractor is as follows.
     
     \begin{Def}\cite{Babin}
     	A set $\mathcal{A}\subset X\cap Y$ is said to be an $(X,Y)$-global attractor of the $(X,Y)$-semigroup if\\
     	(i) $\mathcal{A}$ is bounded in $X$ and compact in $Y$;\\
     	(ii) $\mathcal{A}$ is invariant, i.e. $S(t)\mathcal{A}=\mathcal{A},\ \forall t\geq0$;\\
     	(iii) $\mathcal{A}$ attracts all bounded subsets of $X$ in the norm topology of $Y$, i.e. for any bounded set $B\subset X$, 
     	\[dist_Y(S(t)B,\mathcal{A}):=\sup_{x\in S(t)B} \inf_{y\in \mathcal{A}}\n x-y\n_Y \rightarrow 0,\ as\ t\rightarrow\infty.\] 
     \end{Def}
     
     In order to verify the existence of global attractors, we use the method of Condition (C) which is first proposeed in \cite{ConditionC}.
     
     \begin{Def}\cite{ConditionC}\label{ConditionC}
     	A semigroup $\{S(t)\}_{t\geq0}$ is said to satisfy Condition (C) in $X$ if and only if for any bounded set $B\subset X$ and for any $\varepsilon>0$, there exist a positive time moment $t_B$ and a finite dimensional subspace $X_1$ of $X$ such that $\{PS(t)x\ |\ x\in B,\ t\geq t_B\}$ is bounded in $X$ and 
     	\[\sup_{x\in B}\n (I-P)S(t)x\n_X\leq\varepsilon,\quad\forall t\geq t_0,\]
     	where $P:X\rightarrow X_1$ is the canonical projector.
     \end{Def}
     
     One of the abstract criteria for the existence of the $(X,Y)$-global attractor of a norm-to-weak continuous $(X,Y)$-semigroup is as follows.
     
     \begin{lemma}\cite{Zhong}\label{lemma5.5}
     	Assume that $X,Y$ are two Banach spaces and $\{S(t)\}_{t\geq0}$ is a norm-to-weak continuous $(X,Y)$-semigroup. Then $\{S(t)\}_{t\geq0}$ possesses an $(X,Y)$-global attractor provided that: \\
     	(i) $\{S(t)\}_{t\geq0}$ has a bounded absorbing set in $Y$, i.e. there exists a bounded set $\mathcal{B}\subset Y$ such that for any bounded set $B\subset X$, $\exists t_B>0$ such that $S(t)B\subset\mathcal{B}$, $\forall t\geq t_B$;\\
     	(ii) $\{S(t)\}_{t\geq0}$ satisfies Condition (C) in $Y$.
     \end{lemma}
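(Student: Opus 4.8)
The plan is to follow the classical $\omega$-limit-set construction for global attractors, carried out in the topology of $Y$ and adapted to the fact that $\{S(t)\}$ is only norm-to-weak continuous. Concretely, I would set
\[\mathcal A:=\omega_Y(\mathcal B)=\bigcap_{s\geq0}\overline{\textstyle\bigcup_{t\geq s}S(t)\mathcal B}^{\,Y},\]
where $\mathcal B\subset Y$ is the bounded absorbing set from hypothesis (i), and then verify the three defining properties of an $(X,Y)$-global attractor. The two engines of the argument are: (a) an asymptotic compactness of $\{S(t)\}$ in $Y$ extracted from Condition (C), and (b) the norm-to-weak continuity, which is what allows limits to be passed through $S(t)$ within $Y$.

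First I would prove the asymptotic compactness in $Y$: if $B\subset X$ is bounded, $t_n\to\infty$ and $x_n\in B$, then $S(t_n)x_n$ has a subsequence converging strongly in $Y$. Since $\mathcal B$ absorbs $B$, writing $S(t_n)x_n=S(t_n-t_B)\big(S(t_B)x_n\big)$ with $S(t_B)x_n\in\mathcal B$ reduces matters to $x_n\in\mathcal B$; note that $\mathcal B$ is bounded in $X$ as well, through the embedding $Y\hookrightarrow X$, so Condition (C) is applicable to it. For each $k\in\mathbb N$, Condition (C) with $\varepsilon=1/k$ yields a finite-dimensional $Y_k\subset Y$, a projector $P_k$ and a time $\tau_k$ such that $\sup_{x\in\mathcal B,\,t\geq\tau_k}\n(I-P_k)S(t)x\n_Y\leq 1/k$ while $\{P_kS(t)x\}$ is bounded, hence precompact, in the finite-dimensional space $Y_k$. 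A diagonal extraction over $k$ then produces a subsequence of $S(t_n)x_n$ that is Cauchy in $Y$, and completeness of $Y$ finishes the step. It follows that $\mathcal A$ is nonempty and compact in $Y$; since $S(t)\mathcal B\subset\mathcal B$ for all large $t$ one has $\mathcal A\subset\overline{\mathcal B}^{\,Y}$, so $\mathcal A$ is bounded in $Y$ and a fortiori in $X$.

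Next I would establish the invariance $S(t)\mathcal A=\mathcal A$. For $S(t)\mathcal A\subset\mathcal A$: take $y\in\mathcal A$, say $y=\lim_Y S(t_n)x_n$ with $x_n\in\mathcal B$ and $t_n\to\infty$; strong $Y$-convergence gives strong $X$-convergence, so norm-to-weak continuity yields $S(t+t_n)x_n\rightharpoonup S(t)y$ in $Y$, while the asymptotic compactness of the previous step provides a further subsequence with $S(t+t_n)x_n\to z$ strongly in $Y$ for some $z\in\mathcal A$; uniqueness of limits forces $S(t)y=z\in\mathcal A$. For $\mathcal A\subset S(t)\mathcal A$: with the same $y$ and $t_n>t$, the asymptotic compactness applied to $S(t_n-t)x_n$ gives a subsequence converging strongly in $Y$ to some $w\in\mathcal A$, and then $y=\lim_Y S(t)\big(S(t_n-t)x_n\big)=S(t)w$ by norm-to-weak continuity again. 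Finally, for attraction I would argue by contradiction: were some bounded $B\subset X$ not attracted in $Y$, there would exist $t_n\to\infty$, $x_n\in B$ and $\delta>0$ with $\mathrm{dist}_Y(S(t_n)x_n,\mathcal A)\geq\delta$; the absorbing property gives $S(t_n)x_n\in\mathcal B$ eventually, the asymptotic compactness extracts $S(t_{n_k})x_{n_k}\to y$ strongly in $Y$, and splitting $S(t_n)x_n=S(t_n-t_B)\big(S(t_B)x_n\big)$ with $S(t_B)x_n\in\mathcal B$ exhibits $y\in\omega_Y(\mathcal B)=\mathcal A$, contradicting $\mathrm{dist}_Y(\cdot,\mathcal A)\geq\delta$.

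I expect the invariance step to be the main obstacle. Because only norm-to-weak continuity is at hand, applying $S(t)$ to a convergent sequence produces, a priori, only weak convergence in $Y$, which by itself is too weak to identify $S(t)\mathcal A$ with $\mathcal A$; the resolution is precisely to confront this weak $Y$-limit with the strong $Y$-limit delivered by the Condition-(C) asymptotic compactness and conclude by uniqueness of limits. A small but indispensable preliminary, used throughout, is that the $Y$-bounded absorbing set $\mathcal B$ is $X$-bounded via $Y\hookrightarrow X$, which is what lets Condition (C) — phrased for bounded subsets of $X$ — be invoked for $\mathcal B$.
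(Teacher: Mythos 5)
The paper offers no proof of this lemma: it is quoted verbatim from the cited reference [Zhong] (Zhong--Yang--Sun, J. Differ. Equ. 223 (2006)), so there is nothing internal to compare against. Your reconstruction is, in substance, the standard argument of that reference and it is correct: Condition (C) plus the absorbing set gives asymptotic compactness in $Y$ via the finite-dimensional splitting and a diagonal extraction; $\mathcal{A}=\omega_Y(\mathcal{B})$ is then nonempty, compact and attracting; and the invariance is obtained exactly as you describe, by playing the weak $Y$-limit supplied by norm-to-weak continuity against the strong $Y$-limit supplied by asymptotic compactness and invoking uniqueness of limits. The one point worth making explicit is that you repeatedly use a continuous injection $Y\hookrightarrow X$ (to pass from strong $Y$-convergence to strong $X$-convergence before applying norm-to-weak continuity, and to regard the $Y$-bounded set $\mathcal{B}$ as $X$-bounded). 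This embedding is not stated in Lemma \ref{lemma5.5} itself, but it is a standing hypothesis of the framework (cf. condition (i) of Lemma \ref{lemma5.2}) and holds in the application $Y=\mathcal{H}_s\hookrightarrow\mathcal{H}=X$, so this is a presentational gap rather than a mathematical one.
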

     
     By Lemma \ref{lemma5.2}, we infer from Theorem \ref{well-posedness} and Propsition \ref{continuity} that $\{S(t)\}_{t\geq0}$ is actually a norm-to-weak continuous $(\h,\h_s)$-semigroup. Now, we will verify that $\{S(t)\}_{t\geq0}$ satisfies Condition (C) in $\h_s$.
     
     We use the same notations as before. Let $\lambda_1,\lambda_2,\cdots$ be the eigenvalues of the operator $A=\Delta^2$ with boundary condition (\ref{1-3}) or (\ref{1-4}) and $e_1,e_2,\cdots$ be the corresponding eigenfunctions such that
     \[Ae_i=\lambda_i e_i,\quad 0<\lambda_1\leq\lambda_2\leq\cdots,\quad \lim_{i\rightarrow\infty}\lambda_i=\infty,\]
     and $\{e_1,e_2,\cdots\}$ form an orthonormal basis in $L^2$. Let $H_m=\mathrm{span}\{e_1,\cdots,e_m\}$, $P_m:L^2\rightarrow H_m$ be the orthoprojector and $Q_m=I-P_m$ where $I$ is the identity.
     
     \begin{lemma}\label{lemma5}
     	Let $\sigma\in\mathbb{R}$ be fixed and $\mathcal{K}$ be a compact subset of $V_\sigma$. Then for any $\varepsilon>0$ there exists a positive integer $N$ such that
     	\[\n Q_mv\n_\sigma<\varepsilon,\quad \forall m\geq N,\ v\in\mathcal{K}.\]
     \end{lemma}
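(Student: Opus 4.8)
The plan is to exploit the fact that $V_\sigma = D(A^{\sigma/4})$ is a Hilbert space in which $\{e_i\}_{i\ge 1}$ (suitably normalized) forms an orthogonal basis, so that for each fixed $v\in V_\sigma$ the tail $\|Q_m v\|_\sigma \to 0$ as $m\to\infty$ by Parseval. The only issue is to make this convergence \emph{uniform} over the compact set $\mathcal{K}$, and this is a standard compactness-plus-equicontinuity argument (an Arzelà–Ascoli-type device applied to the sequence of functions $v\mapsto \|Q_m v\|_\sigma$ on $\mathcal{K}$).

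The steps I would carry out, in order, are: \emph{(1)} Recall that for $v=\sum_i c_i e_i\in V_\sigma$ one has $\|Q_m v\|_\sigma^2 = \sum_{i>m} \lambda_i^{\sigma/2} c_i^2$, which is the tail of a convergent series, hence $\|Q_m v\|_\sigma\downarrow 0$ monotonically in $m$ for each fixed $v$. \emph{(2)} Fix $\varepsilon>0$. By compactness of $\mathcal{K}$ in $V_\sigma$, cover $\mathcal{K}$ by finitely many balls $B(v_1,\varepsilon/2),\dots,B(v_k,\varepsilon/2)$ with centers $v_j\in\mathcal{K}$ (or in $V_\sigma$). \emph{(3)} For each $j=1,\dots,k$ choose $N_j$ so that $\|Q_m v_j\|_\sigma<\varepsilon/2$ for all $m\ge N_j$, using step (1); set $N=\max_{1\le j\le k} N_j$. \emph{(4)} For arbitrary $v\in\mathcal{K}$ pick $j$ with $\|v-v_j\|_\sigma<\varepsilon/2$; then since $Q_m$ is a projection (hence $\|Q_m\|\le 1$ on $V_\sigma$), for all $m\ge N$,
\[
\|Q_m v\|_\sigma \le \|Q_m(v-v_j)\|_\sigma + \|Q_m v_j\|_\sigma \le \|v-v_j\|_\sigma + \|Q_m v_j\|_\sigma < \frac{\varepsilon}{2}+\frac{\varepsilon}{2}=\varepsilon,
\]
which gives the claim with the same $N$ for every $v\in\mathcal{K}$.

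The one point that deserves care — and which I would flag as the "main obstacle," though it is minor — is the bound $\|Q_m\|_{\mathcal{L}(V_\sigma)}\le 1$: since $Q_m=I-P_m$ and $P_m$ is the orthoprojector onto $\mathrm{span}\{e_1,\dots,e_m\}$ in $L^2$, and since the $e_i$ are simultaneously orthogonal in every $V_\sigma=D(A^{\sigma/4})$ (eigenvectors of the self-adjoint operator $A$), the operator $Q_m$ acts on $V_\sigma$ simply by truncating the eigenexpansion, so it is an orthogonal projection on $V_\sigma$ as well and is therefore a contraction there. With this observation the triangle-inequality estimate in step (4) goes through verbatim, and the uniformity of $N$ over $\mathcal{K}$ follows entirely from the finiteness of the subcover. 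This completes the proof.
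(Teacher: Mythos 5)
Your proof is correct and follows essentially the same route as the paper's: a finite $\varepsilon/2$-net of $\mathcal{K}$, pointwise decay of the Parseval tails at the net points, and the triangle inequality combined with the fact that $Q_m$ is an orthogonal projection (hence a contraction) on $V_\sigma$. The contraction property you flag is indeed the one point the paper uses implicitly, and your justification of it is the standard one.
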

     \begin{proof}
     	For any $\varepsilon>0$, let $\{v_j\}_{j=1}^M$ be a $\frac{\varepsilon}{2}$-net of $\mathcal{K}$ in $V_\sigma$:
     	\[\mathcal{K}\subset\bigcup_{i=1}^M B_{V_\sigma}(v_j,\frac{\varepsilon}{2}).\]
     	For fixed $i\in\{1,2,\cdots,M\}$, since 
     	\[\n v_j\n_\sigma^2=\n v_j\n_{D(A^\frac{\sigma}{4})}^2=\sum_{i=1}^{\infty}\lambda_i^{\frac{\sigma}{2}}|(v_j,e_i)|^2<\infty,\]
     	there exists an $N_j\in\mathbb{N}^+$ such that 
     	\[\n Q_mv_j\n_\sigma^2=\sum_{i=m+1}^{\infty}\lambda_i^{\frac{\sigma}{2}}|(v_j,e_i)|^2<\frac{\varepsilon}{2}, \quad \forall m\geq N_j.\]
     	Put $N=\max\{N_1,N_2,\cdots,N_M\}$. Then for any $v\in\mathcal{K}$ and $m\geq N$, there exists an $i_0\in\{1,2,\cdots,M\}$ such that $\n v-v_{i_0}\n_\sigma\leq \frac{\varepsilon}{2}$ and 
     	\[\n Q_mv\n_\sigma\leq \n Q_m(v-v_{i_0})\n_\sigma+\n Q_mv_{i_0}\n_\sigma<\varepsilon.\]
     \end{proof}

     \begin{cor}\label{cor5.7}
     	Let Assumption \ref{assumption} be valid. Then the semigroup $\{S(t)\}_{t\geq0}$ satisfies Condition (C) in $\h_s$.
     \end{cor}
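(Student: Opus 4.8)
The plan is to verify Condition (C) for $\{S(t)\}_{t\geq0}$ in $\h_s$ directly along trajectories lying in an absorbing set. Since every bounded subset of $\h_s$ is bounded in $\h$ and, by Theorem \ref{attractor}, the ball $\mathcal{B}_1=\{\xi\in\h_s:\n\xi\n_{\h_s}\leq C_{R_0}\}$ absorbs all bounded subsets of $\h$, it suffices to produce, for every $\varepsilon>0$, an integer $N$ and a time $t_0>0$ such that $\n Q_m\xi_u(t)\n_{\h_s}<\varepsilon$ for all $m\geq N$ and $t\geq t_0$, uniformly over weak solutions $\xi_u=(u,u_t)$ starting in $\mathcal{B}_1$; after one more fixed time shift (and enlarging $\mathcal{B}_1$ if necessary) one may assume in addition that $\n\xi_u(t)\n_\h\leq R_0$ and $\n\xi_u(t)\n_{\h_s}\leq C_{R_0}$ for all $t\geq0$. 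Taking then $X_1=H_m\times H_m$, $P=P_m$, and noting $\n P_m\xi_u(t)\n_{\h_s}\leq C_{R_0}$, this yields Condition (C) in $\h_s$.

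Applying $Q_m$ to (\ref{eq}) and writing $w=Q_mu$ (so $w_t=Q_mu_t$), one obtains $w_{tt}+\kappa A^{1/2}w+Aw+\gamma\n\xi_u\n_\h^{2q}A^{1/2}w_t+Q_mf(u)=0$. Testing this equation with $A^{s/2}w_t+\alpha A^{s/2}w$ ($\alpha>0$ small, fixed) and manipulating the damping and nonlinear terms exactly as in the proof of Theorem \ref{well-posedness}(ii) (see (\ref{3-2-9})--(\ref{3-2-14})) and of Proposition \ref{dissipativity} leads to a differential inequality
\[\frac{d}{dt}\Lambda_m+\gamma\n\xi_u\n_\h^{2q}\n w_t\n_{1+s}^2+\alpha\big(\n w\n_{2+s}^2+\kappa\n w\n_{1+s}^2\big)-\alpha\n w_t\n_s^2\leq\mathcal{R}_m(t),\]
where $\Lambda_m=\tfrac12\n w_t\n_s^2+\tfrac\kappa2\n w\n_{1+s}^2+\tfrac12\n w\n_{2+s}^2+(f(u),A^{s/2}w)+\alpha(w_t,A^{s/2}w)+\tfrac{\alpha\gamma}{2}\n\xi_u\n_\h^{2q}\n w\n_{1+s}^2$, and $\mathcal{R}_m$ gathers the contributions of $f'(u)u_t$, $f(u)$ and $\tfrac{d}{dt}\n\xi_u\n_\h^{2q}$ tested against the appropriate quantities. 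The decisive observation is that every term in $\Lambda_m$ beyond $\tfrac12\n(w,w_t)\n_{\h_s}^2$, and every term in $\mathcal{R}_m$, is tested against $A^{s/2}Q_mu$ or carries a factor $\n Q_mu\n_{1+s}$; since $\n Q_mu\n_\sigma\leq\lambda_{m+1}^{-(2+s-\sigma)/4}\n u\n_{2+s}$ for $\sigma<2+s$, using (\ref{3-1})--(\ref{3-3}), the H\"older--Sobolev chains of (\ref{3-2-12}), the uniform $\h_s$-bound, the bound $\int_t^{t+1}\big|\tfrac{d}{dt}\n\xi_u\n_\h^{2q}\big|\,d\tau\leq C$ from (\ref{3-2-7}), and the tail estimate of Lemma \ref{lemma5} for the part of $f(u)$ not directly absorbed by powers of $\lambda_{m+1}$, one gets $|\Lambda_m(t)-\tfrac12\n(w,w_t)(t)\n_{\h_s}^2|\leq\varepsilon_m$ and $\int_t^{t+1}|\mathcal{R}_m(\tau)|\,d\tau\leq\varepsilon_m$ with $\varepsilon_m\to0$ as $m\to\infty$, uniformly in $t\geq0$.

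The main obstacle is the degeneracy of the damping: to turn the above into decay of $\Lambda_m$ one wants to absorb $\alpha\n w_t\n_s^2$ into $\gamma\n\xi_u\n_\h^{2q}\n w_t\n_{1+s}^2\geq\gamma\n\xi_u\n_\h^{2q}\lambda_{m+1}^{1/2}\n w_t\n_s^2$, which is only possible when $\n\xi_u(t)\n_\h$ is bounded away from $0$ — and this may fail, e.g.\ along an orbit in $\mathcal{A}$ converging to $0$. I would resolve this by a dichotomy in the spirit of Lemma \ref{lemma1}: fix a large $T_0$ (chosen below) and a small $\rho>0$ (chosen after $T_0$), and for $t\geq T_0$ distinguish two cases. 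If there is $\sigma\in[t-T_0,t-1]$ with $\n\xi_u(\sigma)\n_\h\leq\rho$, then by (\ref{3-2-51}) one has $\n\xi_u(\tau)\n_\h\leq C_{R_0,T_0}^{1/2}\rho$ for all $\tau\in[\sigma,t]$, and re-running the smoothing estimate of Theorem \ref{well-posedness}(ii) on $[\sigma,t]$ while tracking the smallness of the data shows (using $f(0)=0$ and the boundedness of $f'$ near $0$, which makes even the additive constant produced by the Young inequality in (\ref{3-2-14}) of order $\rho^2$) that $\n Q_m\xi_u(t)\n_{\h_s}\leq\n\xi_u(t)\n_{\h_s}\leq\omega(\rho)$ with $\omega(\rho)\to0$. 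If instead $\n\xi_u(\tau)\n_\h>\rho$ on all of $[t-T_0,t-1]$, then $\gamma\n\xi_u\n_\h^{2q}>\gamma\rho^{2q}$ there, so once $\lambda_{m+1}^{1/2}\gamma\rho^{2q}\geq2\alpha$ the above inequality becomes $\tfrac{d}{dt}\Lambda_m+\delta\Lambda_m\leq\mathcal{R}_m(t)+C\varepsilon_m$ on $[t-T_0,t-1]$ with $\delta\sim\alpha$ independent of $\rho$ and $T_0$; Gronwall over $[t-T_0,t-1]$ followed by one-step propagation over $[t-1,t]$ gives $\n Q_m\xi_u(t)\n_{\h_s}^2\leq C\big(e^{-\delta(T_0-1)}+\varepsilon_m\big)$.

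Collecting the two cases, one chooses first $T_0$ so large that $Ce^{-\delta(T_0-1)}<\varepsilon^2/4$, then $\rho$ so small that $\omega(\rho)<\varepsilon$ (and the $C_{R_0,T_0}\rho^2$-type constants from Case~1 are controlled), then $N$ so large that $\lambda_{N+1}^{1/2}\gamma\rho^{2q}\geq2\alpha$ and $C\varepsilon_m<\varepsilon^2/4$ for $m\geq N$, and finally $t_0=T_0$; this gives $\n Q_m\xi_u(t)\n_{\h_s}<\varepsilon$ for all $m\geq N$ and $t\geq t_0$, hence Condition (C) in $\h_s$. The two delicate points I would spell out in full are the uniform smallness of $\varepsilon_m$ in the $Q_m$-energy estimate and the quantitative re-derivation of the smoothing estimate for data of small $\h$-norm.
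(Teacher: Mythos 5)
Your overall architecture (reduction to the positively invariant absorbing set bounded in $\h_s$, the tail energy functional for $(Q_mu,Q_mu_t)$, and the use of Lemma \ref{lemma5} to make the lower-order and nonlinear remainders $o(1)$ as $m\to\infty$) matches the paper's proof. The divergence, and the gap, is in how you handle the degeneracy of the damping. Your Case 1 rests on the claim that if $\n\xi_u(\sigma)\n_\h\leq\rho$ then one unit of time later $\n\xi_u(t)\n_{\h_s}\leq\omega(\rho)$ with $\omega(\rho)\to0$, obtained by ``re-running the smoothing estimate while tracking the smallness of the data.'' This claim is not justified and appears to be false. The additive constant $(\gamma/2)^{-s}$ in the Young step of (\ref{3-2-14}) comes from maximizing $c\mapsto (1+s)t^s c^{2s/(1+s)}-\tfrac{st^{1+s}\gamma}{2}c^2$ over $c=\n u_t\n^{1/s}\n u_t\n_{1+s}$; smallness of $\n u_t\n$ does not keep $c$ away from the maximizer, because $\n u_t\n_{1+s}$ is precisely the quantity not yet controlled, so the constant cannot be improved to $O(\rho^2)$ as you assert. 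The obstruction is structural, not an artifact of the proof: if $v$ solves (\ref{eq}) (with $\kappa=0$, $f=0$) then $u=v/\rho$ solves the same equation with $\gamma$ replaced by $\gamma\rho^{2q}$, so data of size $\rho$ sees an effective damping of size $\gamma\rho^{2q}$; energy placed at frequency $\lambda^{1/2}\sim(\gamma\rho^{2q})^{-1}$ decays only by the fixed factor $e^{-c}$ in unit time, while its contribution to $\n u_t\n_s^2$ is $\rho^2\lambda^{s/2}\sim\gamma^{-s}$ (using $qs=1$), which does not vanish as $\rho\to0$. Hence no modulus $\omega$ with $\omega(0+)=0$ exists, and since the whole dichotomy requires $\omega(\rho)<\varepsilon$ after $T_0$ and $\rho$ are fixed, the argument collapses exactly in the regime where the damping degenerates. (Your Case 2 is fine, but it is the non-degenerate regime where everything is standard.)

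The paper avoids the dichotomy altogether with one uniform device: it takes the Lyapunov coupling parameter $\delta=\min\{\delta_0,1,\tfrac{\gamma}{4}\varepsilon^q\}$ \emph{depending on} $\varepsilon$, and absorbs the problematic term via the interpolation--Young inequality (\ref{5-5-8}),
\begin{equation*}
\n u_t^{(2)}\n_s^2\leq \varepsilon+\varepsilon^{-q}\n(u,u_t)\n_\h^{2q}\n u_t^{(2)}\n_{1+s}^2,
\end{equation*}
so that $2\delta\n u_t^{(2)}\n_s^2\leq 2\delta\varepsilon+\tfrac{\gamma}{2}\n(u,u_t)\n_\h^{2q}\n u_t^{(2)}\n_{1+s}^2$ is swallowed by the degenerate damping term no matter how small $\n(u,u_t)\n_\h$ is. The price is a decay rate $\delta\sim\varepsilon^q$ and hence a waiting time $t_0\sim\varepsilon^{-q}\ln(1/\varepsilon)$, which is perfectly acceptable for Condition (C). If you want to salvage your write-up, replace the dichotomy by this single absorption step; the rest of your estimates (the $\varepsilon_m\to0$ bounds for the remainders) can stay as they are.
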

     \begin{proof}
     	By Theorem \ref{attractor}, $\mathcal{B}_1$ is an absorbing set of $\{S(t)\}_{t\geq0}$ which is bounded in $\h_s$. Then
     	\[\mathcal{B}:=\bigcup_{t\geq t_*}S(t)\mathcal{B}_1\]
     	is a positively invariant absorbing set bounded in $\h_s$, where $t_*>0$ such that $S(t)\mathcal{B}_1\subset\mathcal{B}_1\ (\forall t\geq t_*)$. Given any $\varepsilon\in(0,1)$, according to Definition \ref{ConditionC}, it suffices to prove that there exist $t_0=t_0(\varepsilon,\mathcal{B})$ and $m_0=m_0(\varepsilon,\mathcal{B})\in\mathbb{N}$ such that 
     	\[\n Q_mS(t)(u_0,u_1)\n_{\h_s}\leq\varepsilon,\quad \forall (u_0,u_1)\in\mathcal{B}\]
     	holds for all $t\geq t_0$ and $m\geq m_0$.
     	
     	For any $(u_0,u_1)\in\mathcal{B}$, set $(u(t),u_t(t))=S(t)(u_0,u_1)$. Since $\mathcal{B}$ is positively invariant bounded in $\h_s$, we have
     	\begin{equation}\label{5-5-1}
     		\n(u(t),u_t(t))\n_{\h_s}\leq C_\mathcal{B},\quad\forall t\geq0.
     	\end{equation}
     	Denoting by $(u^{(1)},u_t^{(1)})=(P_mu,P_mu_t)$ and $(u^{(2)},u_t^{(2)})=(Q_mu,Q_mu_t)$, we need to prove that $\n(u^{(2)}(t),u_t^{(2)}(t))\n_{\h_s}\leq\varepsilon$ for all $t\geq t_0$ and $m\geq m_0$.
     	
     	Let
     	\[W(t)=\frac{1}{2}\n u^{(2)}_t(t)\n_s^2+\frac{\kappa}{2}\n u^{(2)}(t)\n_{1+s}^2+\frac{1}{2}\n u^{(2)}(t)\n_{2+s}^2+\delta(u_t^{(2)}(t),A^\frac{s}{2}u^{(2)}(t))\]
     	where $\delta>0$ small will be determined later. From the embeddings $V_s\hookrightarrow L^2$ and $V_{2+s}\hookrightarrow V_{2s}$, there exists $\delta_0=\delta_0(\Omega)>0$ such that for $\forall\delta\in(0,\delta_0]$,
     	\[\delta\Big|(u_t^{(2)},A^\frac{s}{2}u^{(2)})\Big|\leq \frac{\delta_0}{2}\Big(\n u^{(2)}_t\n^2+\n u^{(2)}\n_{2s}^2\Big)\leq \frac{1}{4}\Big(\n u^{(2)}_t\n_s^2+\n u^{(2)}\n_{2+s}^2\Big)\]
     	and then
     	\begin{equation}\label{5-5-0}
     		\frac{1}{4}\n(u^{(2)}(t),u_t^{(2)}(t))\n_{\h_s}^2\leq W(t)\leq C\n(u^{(2)}(t),u_t^{(2)}(t))\n_{\h_s},
     	\end{equation}
     	where the constant $C=C(\kappa,\Omega)$.
     	
     	Taking $\delta=\min\{\delta_0,1,\frac{\gamma}{4}\varepsilon^q\}$ and multiplying (\ref{eq}) by $A^\frac{s}{2}u^{(2)}_t+\delta A^\frac{s}{2}u^{(2)}$ and integrating over $x\in\Omega$ (which can be justified similarly as in Theorem \ref{well-posedness} (iii)), we get
     	\begin{equation}
     		\begin{split}\label{5-5-2}
     			&\quad\frac{d}{dt}W(t)+2\delta W(t)+\gamma\n(u,u_t)\n_\h^{2q}\n u_t^{(2)}\n_{1+s}^2\\
     			&=2\delta\n u_t^{(2)}\n_s^2+2\delta^2(u_t^{(2)},A^\frac{s}{2}u^{(2)})-\frac{d}{dt}(f(u),A^\frac{s}{2}u^{(2)})+(f'(u)u_t,A^\frac{s}{2}u^{(2)})\\
     			&\quad-\gamma\delta\n(u,u_t)\n_\h^{2q}(A^\frac{1}{2}u_t^{(2)},A^\frac{s}{2}u^{(2)})-\delta(f(u),A^\frac{s}{2}u^{(2)})\\
     			&=2\delta\n u_t^{(2)}\n_s^2+2\delta^2(u_t^{(2)},A^\frac{s}{2}u^{(2)})-\delta(f(u),A^\frac{s}{2}u^{(2)})\\
     			&\quad-\frac{d}{dt}(f(u),A^\frac{s}{2}u^{(2)})+(f'(u)u_t,A^\frac{s}{2}u^{(2)})\\
     			&\quad-\frac{\gamma\delta}{2}\frac{d}{dt}\Big[\n(u,u_t)\n_\h^{2q}\n u^{(2)}\n_{1+s}^2\Big]+\frac{\gamma\delta}{2}\n u^{(2)}\n_{1+s}^2\frac{d}{dt}\n(u,u_t)\n_\h^{2q}.
     		\end{split}
     	\end{equation}
     	
     	By the compact embeddings $V_{2+s}\hookrightarrow\hookrightarrow V_{2s}$, $V_{2+s}\hookrightarrow\hookrightarrow V_{1+s}$ and (\ref{5-5-1}), we deduce from Lemma \ref{lemma5} that there exists $N_1=N_1(\varepsilon)$ such that
     	\begin{equation}\label{5-5-3}
     		\n u^{(2)}\n_{2s}\leq \delta\varepsilon\quad\mathrm{and}\quad\n u^{(2)}\n_{1+s}\leq \delta\varepsilon,\quad\forall t\geq0
     	\end{equation}
     	hold for all $m\geq N_1$. Then, using (\ref{3-3}), (\ref{5-5-1}) and (\ref{5-5-3}), we have
     	\begin{equation}\label{5-5-4}
     		\begin{split}
     			\Big|2\delta^2(u_t^{(2)},A^\frac{s}{2}u^{(2)})-\delta(f(u),A^\frac{s}{2}u^{(2)})\Big|&\leq 2(\n u_t^{(2)}\n+\n f(u)\n)\n u^{(2)}\n_{2s}\\
     			&\leq C(\n u_t\n+\n u\n_2+\n u\n_2^{p+1})\n u^{(2)}\n_{2s}\\
     			&\leq C_\mathcal{B}\delta\varepsilon.
     		\end{split}
     	\end{equation}
     	
     	When the spatial dimension $n\geq5$, since $\frac{4np}{8+4s-nsp}\leq\frac{2n}{n-2(2+s)}$, we have the Sobolev embedding $V_{2+s}\hookrightarrow H^{2+s}\hookrightarrow L^\frac{4np}{8+4s-nsp}$. Combining this with (\ref{3-1}), (\ref{5-5-3}), H\"{o}lder's inequality with $\frac{8+4s-nsp}{4n}+\frac{n-2s}{2n}+\frac{2n-8+nsp}{4n}=1$ and the embeddings $V_s\hookrightarrow H^s\hookrightarrow L^\frac{2n}{n-2s}$, $V_{2-\frac{nsp}{4}}\hookrightarrow H^{2-\frac{nsp}{4}}\hookrightarrow L^\frac{4n}{2n-8+nsp}$, we obtain
     	\begin{equation}\label{5-5-5}
     		\begin{split}
     			&\quad\Big|(f'(u)u_t,A^\frac{s}{2}u^{(2)})\Big|\\
     			&\leq C\n u_t\n\n u^{(2)}\n_{2s}+C\n |u|^p\n_{L^\frac{4n}{8+4s-nsp}}\n u_t\n_{L^\frac{2n}{n-2s}}\n A^\frac{s}{2}u^{(2)}\n_{L^\frac{4n}{2n-8+nsp}}\\
     			&\leq C_\mathcal{B}\delta\varepsilon+C\n u\n^p_{L^\frac{4np}{8+4s-nsp}}\n u_t\n_s\n A^\frac{s}{2}u^{(2)}\n_{2-\frac{nsp}{4}}\\
     			&\leq C_\mathcal{B}\delta\varepsilon+C\n u\n^p_{2+s}\n u_t\n_s\n u^{(2)}\n_{2+2s-\frac{nsp}{4}}\\
     			&\leq C_\mathcal{B}\delta\varepsilon+C_\mathcal{B}\n u^{(2)}\n_{2+2s-\frac{nsp}{4}}.
     		\end{split}
     	\end{equation}
     	Since $2+2s-\frac{nsp}{4}\leq 2+2s-\frac{5s}{4}<2+s$, we have the compact embedding $V_{2+s}\hookrightarrow\hookrightarrow V_{2+2s-\frac{nsp}{4}}$. Then due to Lemma \ref{lemma5}, there exists $N_2=N_2(\varepsilon)$ such that
     	\begin{equation}\label{5-5-6}
     		\n u^{(2)}\n_{2+2s-\frac{nsp}{4}}\leq \delta\varepsilon,\quad\forall t\geq0
     	\end{equation}
     	holds for all $m\geq N_2$. Inserting (\ref{5-5-6}) into (\ref{5-5-5}), we have
     	\begin{equation}\label{5-5-7}
     		\Big|(f'(u)u_t,A^\frac{s}{2}u^{(2)})\Big|\leq C_\mathcal{B}\delta\varepsilon.
     	\end{equation}
     	It is easy to check that (\ref{5-5-7}) still holds when $1\leq n\leq 4$.
     	
     	Moreover, it follows from interpolation inequality (\ref{2-3}) and Young's inequality with $\varepsilon$ that
     	\begin{equation}\label{5-5-8}
     		\n u_t^{(2)}\n_s^2\leq \n u_t^{(2)}\n^\frac{2}{1+s}\n u_t^{(2)}\n_{1+s}^\frac{2s}{1+s}\leq \varepsilon+\varepsilon^{-\frac{1}{s}}\n u_t^{(2)}\n^\frac{2}{s}\n u_t^{(2)}\n_{1+s}^2\leq \varepsilon+\varepsilon^{-q}\n(u,u_t)\n_\h^{2q}\n u_t^{(2)}\n_{1+s}^2
     	\end{equation}
     	
     	Inserting (\ref{5-5-4}), (\ref{5-5-7}) and (\ref{5-5-8}) into (\ref{5-5-2}) and noting that $\delta\leq \frac{\gamma}{4}\varepsilon^q$, we have
     	\begin{equation}\label{5-5-9}
     		\begin{split}
     			\frac{d}{dt}W(t)+2\delta W(t)&\leq-\gamma\n(u,u_t)\n_\h^{2q}\n u_t^{(2)}\n_{1+s}^2+2\delta\varepsilon\\
     			&\quad+2\delta\varepsilon^{-q}\n(u,u_t)\n_\h^{2q}\n u_t^{(2)}\n_{1+s}^2+C_\mathcal{B}\delta\varepsilon-\frac{d}{dt}(f(u),A^\frac{s}{2}u^{(2)})\\
     			&\quad-\frac{\gamma\delta}{2}\frac{d}{dt}\Big[\n(u,u_t)\n_\h^{2q}\n u^{(2)}\n_{1+s}^2\Big]+\frac{\gamma\delta}{2}\n u^{(2)}\n_{1+s}^2\frac{d}{dt}\n(u,u_t)\n_\h^{2q}\\
     			&\leq C_\mathcal{B}\delta\varepsilon-\frac{d}{dt}(f(u),A^\frac{s}{2}u^{(2)})\\
     			&\quad-\frac{\gamma\delta}{2}\frac{d}{dt}\Big[\n(u,u_t)\n_\h^{2q}\n u^{(2)}\n_{1+s}^2\Big]+\frac{\gamma\delta}{2}\n u^{(2)}\n_{1+s}^2\frac{d}{dt}\n(u,u_t)\n_\h^{2q}.
     		\end{split}
     	\end{equation}
     	Applying Gronwall's Lemma, we get
     	\begin{equation}\label{5-5-11}
     		\begin{split}
     			W(t)&\leq W(0)e^{-2\delta t}+C_\mathcal{B}\delta\varepsilon\int_{0}^{t}e^{-2\delta(t-\tau)}d\tau-\int_{0}^{t}e^{-2\delta(t-\tau)}\frac{d}{d\tau}(f(u),A^\frac{s}{2}u^{(2)})d\tau\\
     			&\quad-\frac{\gamma\delta}{2}\int_{0}^{t}e^{-2\delta(t-\tau)}\frac{d}{d\tau}\Big[\n(u,u_t)\n_\h^{2q}\n u^{(2)}\n_{1+s}^2\Big]d\tau\\
     			&\quad+\frac{\gamma\delta}{2}\int_{0}^{t}e^{-2\delta(t-\tau)}\n u^{(2)}\n_{1+s}^2\frac{d}{d\tau}\n(u,u_t)\n_\h^{2q}d\tau.
     		\end{split}
     	\end{equation}
     	Using integration by parts and (\ref{5-5-1}), (\ref{5-5-3}), we have
     	\begin{align*}
     		\quad C_\mathcal{B}\delta\varepsilon\int_{0}^{t}e^{-2\delta(t-\tau)}d\tau
     		=C_\mathcal{B}\delta\varepsilon \cdot \frac{1-e^{-2\delta t}}{2\delta}\leq C_{\mathcal{B}}\varepsilon,
     	\end{align*}
     	\begin{align*}
     		&\quad-\int_{0}^{t}e^{-2\delta(t-\tau)}\frac{d}{d\tau}(f(u),A^\frac{s}{2}u^{(2)})d\tau\\
     		&=-e^{-2\delta(t-\tau)}(f(u(\tau)),A^\frac{s}{2}u^{(2)}(\tau))\Big|_{\tau=0}^{\tau=t}\\
     		&\quad+2\delta\int_{0}^{t}(f(u),A^\frac{s}{2}u^{(2)})e^{-2\delta(t-\tau)}d\tau\\
     		&\leq C_\mathcal{B}\delta\varepsilon+C_\mathcal{B}\delta^2\varepsilon\cdot\frac{1-e^{-2\delta t}}{2\delta}\\
     		&\leq C_{\mathcal{B}}\varepsilon,
     	\end{align*}
     	\begin{align*}
     		&\quad-\frac{\gamma\delta}{2}\int_{0}^{t}e^{-2\delta(t-\tau)}\frac{d}{d\tau}\Big[\n(u,u_t)\n_\h^{2q}\n u^{(2)}\n_{1+s}^2\Big]d\tau\\
     		&=-\frac{\gamma\delta}{2}e^{-2\delta(t-\tau)}\n(u(\tau),u_t(\tau))\n_\h^{2q}\n u^{(2)}(\tau)\n_{1+s}^2\Big|_{\tau=0}^{\tau=t}\\
     		&\quad+\gamma\delta^2\int_{0}^{t}\n(u,u_t)\n_\h^{2q}\n u^{(2)}\n_{1+s}^2e^{-2\delta(t-\tau)}d\tau\\
     		&\leq C_\mathcal{B}\delta^3\varepsilon^2+C_\mathcal{B}\delta^4\varepsilon^2\cdot\frac{1-e^{-2\delta t}}{2\delta}\\
     		&\leq C_{\mathcal{B}}\varepsilon.
     	\end{align*}
        With regard to the last term in (\ref{5-5-11}), it follows from the energy equality (\ref{energy}) that
        \[\gamma\int_{0}^{t}\n(u,u_t\n_\h^{2q})\n u_t\n_1^2d\tau=E(u(0))-E(u(t))\leq C_\mathcal{B},\quad\forall t\geq0.\]
        Then, since $u$ solves the equation (\ref{eq}), we have
     	\begin{align*}
     		&\quad\frac{\gamma\delta}{2}\int_{0}^{t}e^{-2\delta(t-\tau)}\n u^{(2)}\n_{1+s}^2\frac{d}{d\tau}\n(u,u_t)\n_\h^{2q}d\tau\\
     		&=\frac{q\gamma\delta}{2}\int_{0}^{t}e^{-2\delta(t-\tau)}\n u^{(2)}\n_{1+s}^2\n(u,u_t)\n_\h^{2(q-1)}\frac{d}{d\tau}\Big[\n u_t\n^2+\n u\n_2^2\Big]d\tau\\
     		&=\frac{q\gamma\delta}{2}\int_{0}^{t}e^{-2\delta(t-\tau)}\n u^{(2)}\n_{1+s}^2\n(u,u_t)\n_\h^{2(q-1)}\Big(\kappa A^\frac{1}{2}u+\gamma\n(u,u_t)\n_\h^{2q}A^\frac{1}{2}u_t+f(u),u_t\Big)d\tau\\
     		&\leq C_\mathcal{B}\delta^3\varepsilon^2\cdot\frac{1-e^{-2\delta t}}{2\delta}+C_\mathcal{B}\delta^3\varepsilon^2\int_{0}^{t}\n(u,u_t\n_\h^{2q})\n u_t\n_1^2d\tau\\
     		&\leq C_{\mathcal{B}}\varepsilon.
     	\end{align*}
     	Plugging these estimates into (\ref{5-5-11}) and using (\ref{5-5-0}), we conclude that
     	\begin{align*}
     		\n(u^{(2)}(t),u_t^{(2)}(t))\n_{\h_s}^2\leq4W(t)\leq C\n(u_0,u_1)\n_{\h_s}^2e^{-2\delta t}+C_\mathcal{B}\varepsilon\leq 2C_\mathcal{B}
     		\varepsilon
     	\end{align*}
     	provided that  $t\geq\frac{1}{2\delta}\ln\frac{1}{\varepsilon}=(\min\{\delta_0,1,\frac{\gamma}{4}\varepsilon^q\})^{-1}\ln\frac{1}{\sqrt{\varepsilon}}$ and $m\geq\max\{N_1,N_2\}$. The proof is complete.
     \end{proof}
     
     Thanks to Lemma \ref{lemma5.5}, using Theorem \ref{attractor} and Corollary \ref{cor5.7}, we have obtained the following result:
     
     \begin{thm}\label{thm}
     	Let Assumption \ref{assumption} be valid. Then the semigroup $\{S(t)\}_{t\geq0}$ generated by weak solutions of Eq. (\ref{eq}) is a norm-to-weak continuous $(\h,\h_s)$-semigroup and possesses an $(\h,\h_s)$-global attractor $\mathcal{A}_s$, that is, $\mathcal{A}_s$ is invariant, compact in $\h_s$ and attracts any bounded subset of $\h$ in the norm topology of $\h_s$. 
     	
     	Moreover, according to the definition of attractors, it is obvious that $\mathcal{A}=\mathcal{A}_s$, where $\mathcal{A}$ is the global attractor of dynamical system $(\h,S(t))$ constructed in Theorem \ref{attractor}.
     \end{thm}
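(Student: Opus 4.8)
The plan is to assemble the ingredients already established in Sections 3 and 4 with the abstract machinery of \cite{Zhong}. Recall first that $S(t)\colon\h\to\h$ is well defined by Theorem \ref{well-posedness} and Theorem \ref{uniqueness}. I would proceed in three stages: (a) show $\{S(t)\}_{t\geq0}$ is a norm-to-weak continuous $(\h,\h_s)$-semigroup by verifying the hypotheses of Lemma \ref{lemma5.2} with $X=\h$, $Y=\h_s$; (b) apply Lemma \ref{lemma5.5} to obtain an $(\h,\h_s)$-global attractor $\mathcal{A}_s$; (c) identify $\mathcal{A}_s$ with the attractor $\mathcal{A}$ of Theorem \ref{attractor} via uniqueness of global attractors.

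For stage (a): Theorem \ref{well-posedness}(i) gives $S(t)\h\subset\h_s$ for every $t>0$, so $\{S(t)\}_{t\geq0}$ is an $(\h,\h_s)$-semigroup. The canonical injection $i\colon\h_s\hookrightarrow\h$ has dense range since $\mathrm{span}\{(e_j,0),(0,e_j)\}_{j\geq1}\subset\h_s$ is dense in $\h$; since $\h$ and $\h_s$ are Hilbert, the adjoint $i^*\colon\h^*\to\h_s^*$ is likewise densely injective, which is hypothesis (i) of Lemma \ref{lemma5.2}, and hypothesis (ii) is the continuity of $\{S(t)\}_{t\geq0}$ on $\h$ proved in Proposition \ref{continuity}. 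It remains to check that $\{S(t)\}_{t\geq0}$ maps compact subsets of $\h_s\times(0,\infty)$ into bounded subsets of $\h_s$: any such set lies in $K\times[a,b]$ with $K\subset\h_s$ bounded (hence bounded in $\h$, say $\n\xi\n_\h\leq R$ for $\xi\in K$) and $0<a\leq b<\infty$, and then estimate (\ref{3-8}) applied with $T=b$ gives $\n S(t)\xi\n_{\h_s}^2\leq t^{-(1+s)}Q_2(R,b)\leq a^{-(1+s)}Q_2(R,b)$ for all $\xi\in K$, $t\in[a,b]$, the required bound. Lemma \ref{lemma5.2} then yields the norm-to-weak continuity.

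For stage (b): invoke Lemma \ref{lemma5.5} with $X=\h$, $Y=\h_s$. Its hypothesis (i) is the bounded absorbing set $\mathcal{B}_1=\{\xi\in\h_s\colon\n\xi\n_{\h_s}\leq C_{R_0}\}$ built in the proof of Theorem \ref{attractor} (combine Proposition \ref{dissipativity} with estimate (\ref{3-8})), which is bounded in $\h_s$ and absorbs every bounded subset of $\h$; its hypothesis (ii), Condition (C) in $\h_s$, is exactly Corollary \ref{cor5.7}. Hence $\{S(t)\}_{t\geq0}$ possesses an $(\h,\h_s)$-global attractor $\mathcal{A}_s$, invariant, compact in $\h_s$, and attracting every bounded subset of $\h$ in the $\h_s$-topology.

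For stage (c): since $\h_s\hookrightarrow\h$ continuously, $\mathcal{A}_s$ is compact in $\h$, is $S(t)$-invariant, and attracts every bounded subset of $\h$ in the $\h$-metric; thus $\mathcal{A}_s$ is a global attractor of $(\h,S(t))$, and uniqueness of the global attractor forces $\mathcal{A}_s=\mathcal{A}$. The main analytic content has already been spent in the preceding sections — the smoothing estimate (\ref{3-8}), the no-finite-time-decay bound of Lemma \ref{lemma1} underlying uniqueness, the dissipativity of Proposition \ref{dissipativity}, and the high-mode tail estimate in Corollary \ref{cor5.7} — so I do not expect a genuine obstacle here; the only points needing a little care are the density/duality condition (i) of Lemma \ref{lemma5.2} and confirming that the compact-to-bounded property really follows from (\ref{3-8}) rather than from a more delicate analysis near $t=0$.
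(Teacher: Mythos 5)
Your proposal is correct and follows essentially the same route as the paper: norm-to-weak continuity via Lemma \ref{lemma5.2} (using Theorem \ref{well-posedness}, Proposition \ref{continuity}, and the smoothing estimate (\ref{3-8})), then Lemma \ref{lemma5.5} with the absorbing set $\mathcal{B}_1$ and Condition (C) from Corollary \ref{cor5.7}, and finally identification of $\mathcal{A}_s$ with $\mathcal{A}$ by uniqueness. In fact you supply more detail than the paper does on the density/duality hypothesis and the compact-to-bounded check, and those details are accurate.
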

     
     \begin{remark}
     	Theorem \ref{attractor} implies that the dynamical system $(\h,S(t))$ posesses a compcat absorbing set, then by \citep[Theorem 3.1]{Zhang}, $\{S(t)\}_{t\geq0}$ is global exponentially $\kappa$-dissipative and there exists an exponential attraction set of $(\h,S(t))$, i.e. there exists a compact subset $\mathcal{A}^*\subset\h$ such that $\mathcal{A}^*$ is positively invariant and exponentially attracts any bounded subset $B\subset\h$:
     	\[dist_\h(S(t)B,\mathcal{A}^*)\leq C(\n B\n_\h)e^{-\alpha t},\quad\forall t\geq0.\]
     	
     	Note that $\mathcal{A}^*$ is not an exponential attractor of $(\h,S(t))$ since we don't know whether it has finite fractal dimension. In fact, to our best knowledge, there are no any criteria to verify the finiteness of fractal dimension of global attractors in the degenerate case like problem (\ref{1-1}). We shall concern this interesting problem in the future.
     \end{remark}

  \section*{Acknowledgements}
  This work is supported by the National Science Foundation of China Grant (11731005).
  
  \section*{Data Availability Statement}	
  Data sharing is not applicable to this article as no new data were created or analyzed in this study.

\section*{References}

\end{document}